\newtheorem{theorem}{Theorem}
\newtheorem{remark}{Remark}
\newtheorem{example}{Example}[section]
\renewcommand{\baselinestretch}{1.1}
\def\eqsp{\noalign{\vskip 5pt}}
\def\dsp{\displaystyle}
\newcommand{\gm}[1]{\gamma_{#1}}
\newcommand{\lp}{\left (}
\newcommand{\rp}{\right )}
\newcommand{\eqn}[1]{(\ref{#1})}
\newcommand{\eq}{\begin{equation}}
\newcommand{\en}{\end{equation}}
\newcommand{\eqm}{\begin{eqnarray}}
\newcommand{\enm}{\end{eqnarray}}
\newcommand{\eqmno}{\begin{eqnarray*}}
\newcommand{\enmno}{\end{eqnarray*}}
\newcommand{\eql}[1]{\begin{equation}\label{#1}}
\newcommand{\fr}[2]{{\frac{#1}{#2}} }
\newcommand{\vthin}{\vskip 5pt}
\newcommand{\ignore}[1]{}
\def\bc{\begin{center}}
\def\ec{\end{center}}
\def\bi{\begin{itemize}}
\def\ei{\end{itemize}}
\def\be{\begin{enumerate}}
\def\ee{\end{enumerate}}
\def\alf{\alpha}
\def\reals{{{\rm l} \kern -.15em {\rm R} }}
\def\qquad{\quad\quad}
\def\eqsp{\noalign{\vskip 5pt}}
\def\qquad{\quad\quad}
\def\dsp{\displaystyle}
\newcommand{\eqml}[1]{\eql{#1}\begin{array}{rcl}}
\newcommand{\enml}{\end{array}\end{equation}}
\newcommand{\bfalf}{\mbox{\boldmath $\alpha$}}
\newcommand{\bfbeta}{\mbox{\boldmath $\beta$}}
\renewcommand{\baselinestretch}{1.1}
\begin{document}

\title{High order two-grid finite difference methods for interface and internal layer problems}

\author{Zhilin Li \thanks{Department of Mathematics, North Carolina State University, Raleigh,
NC 27695-8205, USA, Email: zhilin@math.ncsu.edu}
\and
Kejia Pan\thanks{School of Mathematics and Statistics, HNP-LAMA, Central South University, Changsha, Hunan, 410083, China, Email: kejiapan@csu.edu.cn }
\and Juan Ruiz-\'Alvarez \thanks{Departamento de Matem\'atica  Aplicada y Estad\'istica. Universidad  Polit\'ecnica de Cartagena,  Spain, Email: juan.ruiz@upct.es}
}

\date{}
\maketitle


\begin{abstract}

Second order accurate Cartesian grid methods have been well developed for interface problems in the literature. However,
it is challenging to develop third or higher order accurate methods for problems with curved interfaces and internal boundaries.
 In this paper, alternative approaches based on two different grids are developed for some interface and internal layer problems,
 which are  different from adaptive mesh refinement (AMR) techniques. 
 For one dimensional,  or two-dimensional problems with straight interfaces or boundary layers that are parallel to one of the axes,  the discussion is relatively easy.  One of  challenges is how to construct
 a fourth order compact finite difference scheme  at boarder grid points that connect two meshes.
 A two-grid method that employs a second order discretization near the interface in the fine mesh and a fourth order discretization away from the interface in the coarse  and boarder grid points is proposed.  
 For two dimensional problems with a curved interface or an internal layer,  a level set representation is utilized for which we can build a fine mesh within a tube $|\varphi({\bf x}) | \le \delta h$ of the interface. A new super-third  seven-point discretization that can guarantee the discrete maximum principle has been developed at hanging nodes.  The coefficient matrices of the finite difference equations developed in this paper are M-matrices,  which leads to the convergence of the finite difference schemes. Non-trivial numerical examples presented in this paper have confirmed the desired accuracy and convergence.

 \end{abstract}

{\bf keywords:}
 Two-grid method,  interface problems,   internal layer problems, finite difference method,  discrete maximum principle, hanging nodes.

 {\bf AMS Subject Classification 2000}
 65N06,  65N50.

\section{Introduction}

\smallskip

High order compact schemes are useful or even required for many applications such as high wave numbers, oscillatory solutions, and problems on large or infinite domains. In general,
high order compact schemes are suitable for rectangular meshes and boundaries.
For interface problems, the solutions often depend on the curvature of the interface, which presents challenges to develp  high order  discretizations near the interface. 
Various efforts have been made recently. For singular source only interface problems, fourth order schemes  for Poisson equations have been developed, see for example \cite{Xie-Ying-20,pan-he-li-HOC21};  and a sixth order method if the interface has an analytic expressions \cite{sixth_IIM}.  A third order scheme for elliptic partial differential equations (PDEs)  with piecewise constant but discontinuous coefficients  in the presence of fixed interfaces in \cite{pan-he-li-HOC21}. The implementation for high order schemes for interface problems with curved interface is rather complicated.
Note that the curvature of an interface  involves second order derivatives, and  is non-linear.  A related topic is how to resolve the solution accurately near an internal layer, which is a long outstanding challenging problem in computational fluid dynamics and other applications especially if the boundary layer is not parallel to one of the axes.  It is well-known that often the accuracy of the  pressure is one order lower than  that of the velocity due to the boundary layer effect in solving incompressible Naiver-Stokes equations \cite{chorin:proj2,brown-cortez-minion,cortez-varela}.
In some sense, to resolve the solution near an internal (boundary) layer is like solving an interface problem in a structured mesh. 

Our  idea in this paper is to use  two-grid methods to solve these challenging problems. The idea is different from adaptive mesh refinement (AMR) techniques \cite{MR3026197,MR3091860,Berger-Colella,Griffith-Hornung-McQueen-Peskin,Griffith,li_song_AMR,pengs}.
For one dimensional (1D) interface problems,
we use a fourth order method on a mesh with  a step size $h$ away from the interface; and a  second order method with a finer mesh  with step size $h/r$  near the interface. Thus, we can get an approximation with  $O(h^4) + O((h/r)^2)$ global accuracy. In the left diagram in Figure~\ref{fig:1dgrid},  we show a diagram of  a two-grid for an interface problem (top); and a boundary layer problem (bottom). In the right  plots in Figure~\ref{fig:1dgrid}, we show  the solution and error plots  for a famous boundary layer problem.  In Figure~\ref{tab:2D_1D}, we show the solution and a two-grid for a  two-dimensional example with a line interface;  and in  Figure~\ref{fig:amr_mesh}  a two-grid for a circular interface.
 Interested readers can get an idea of the proposed two-grid methods from those plots.

Note also that there are different interpretations of `two-grid methods' in the literature including different  meshes for different variables  \cite{xu-two-grid-96,MR4309805,MR1317859}; different meshes for interior and boundary domains \cite{MR4027793,MR1317859,tw-grid-bdl,MR4291187}. For boundary layers that are parallel to one of the axes,  a Shishkin mesh has been employed to capture the boundary layer effect, see for example, \cite{Shishkin-mesh,Shishkin-mesh-95,Shishkin-mesh-10,MR3284632}.

In this paper, we develop high order two-grid methods that are as compact as possible with some new ideas. One of the keys in our methods is that we try to enforce the sign properties for  elliptic differential equations  so that the discrete maximum principle can be preserved, which leads to the convergence of the finite difference methods. For one-dimensional problems, we have developed a fourth-order compact scheme for Poison/Helmholtz equations with non-uniform grids. For two-dimensional (2D) interface problems with a line interface that is parallel to one of the axes, we have developed a new finite difference method that is fourth order away from the interface,  second order in one coordinate direction with a fine mesh,  and fourth order in the other with a coarse mesh.
For 2D general interface problems, the algorithm and discussion  are more complicated. One of difficulties is how to develop high order discretizations at hanging nodes, which can preserve  the discrete maximum principle. Different from a traditional approach using ghost points and interpolations, which cannot guarantee the stability of the algorithm, we use a new undetermined coefficients method so that the sign property can be satisfied with the highest possible accuracy. The convergence of the developed two-grid methods is guaranteed with the sign property and the local truncation error analysis for elliptic interface problems and internal layer problems.

The rest of paper is organized as follows. In the next section, we discuss the two-grid method for one-dimensional interface problems, and two-dimensional interface problems with line interfaces that are parallel to one of the axes. Numerical examples are also presented.  In Section~\ref{sec:2D}, we discuss the two-grid method for general elliptic interface problems,   some implementation details, and  a new finite difference discretization at hanging nodes, which can preserve the discrete maximum principle. Several benchmark examples from the literature are tested and analyzed. We also present a numerical experiment of a non-trivial internal layer problem. We conclude in the last section.

\section{Two-grid methods for 1D interface problems and 2D problems with line interfaces}

We first show a simple and well-known one-dimensional boundary layer example,
\eqm
  \epsilon u'' -  u'  +   u = f(x), \quad 0< x < 1, \qquad u(0) =1, \quad u(1) = 3.
\enm
The solution  exhibits a boundary layer near $x=1$ if $\epsilon$ is small.

\begin{figure}[htbp]
\centering
\includegraphics[height=0.4\textwidth]{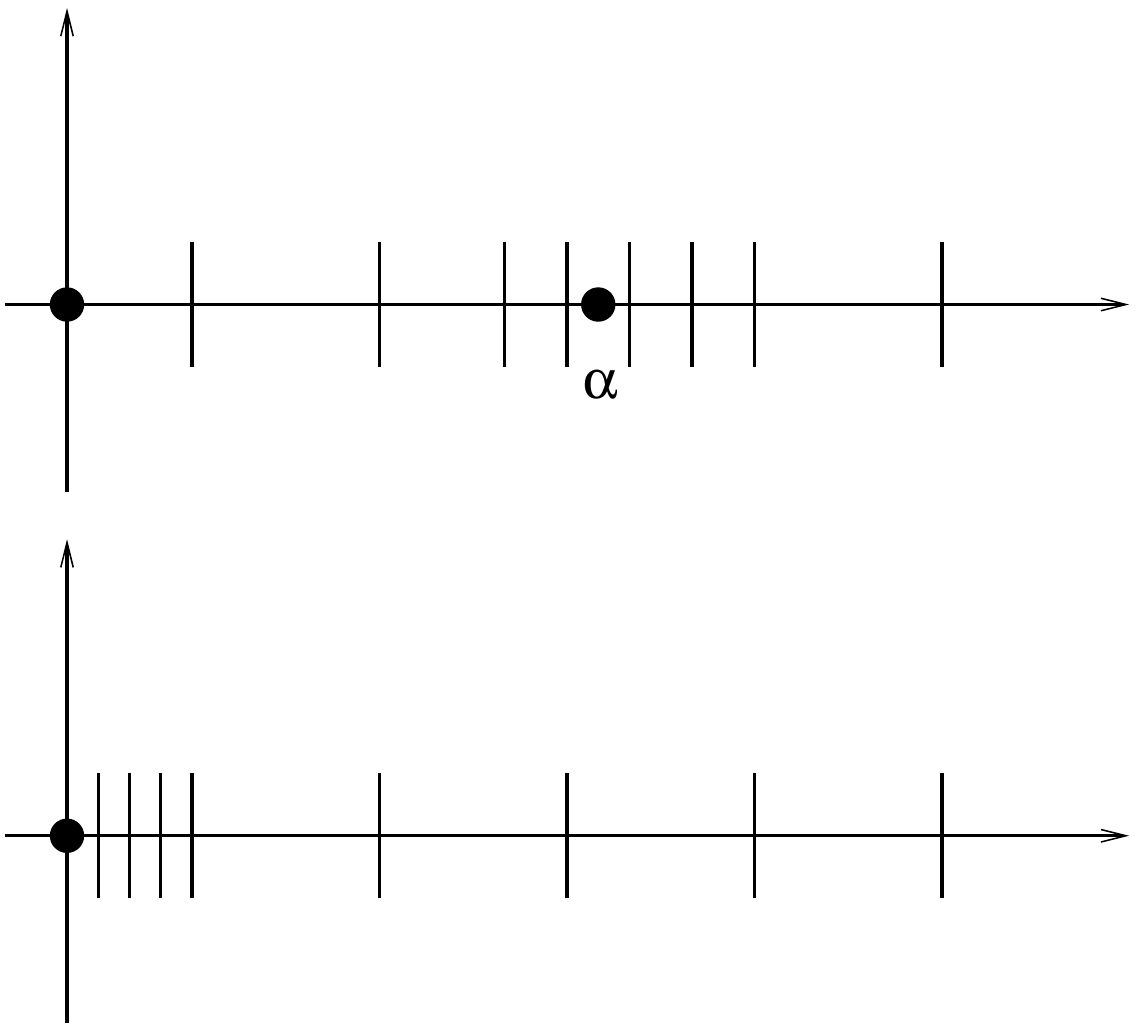}
\hfil
  \includegraphics[height=0.4\textwidth]{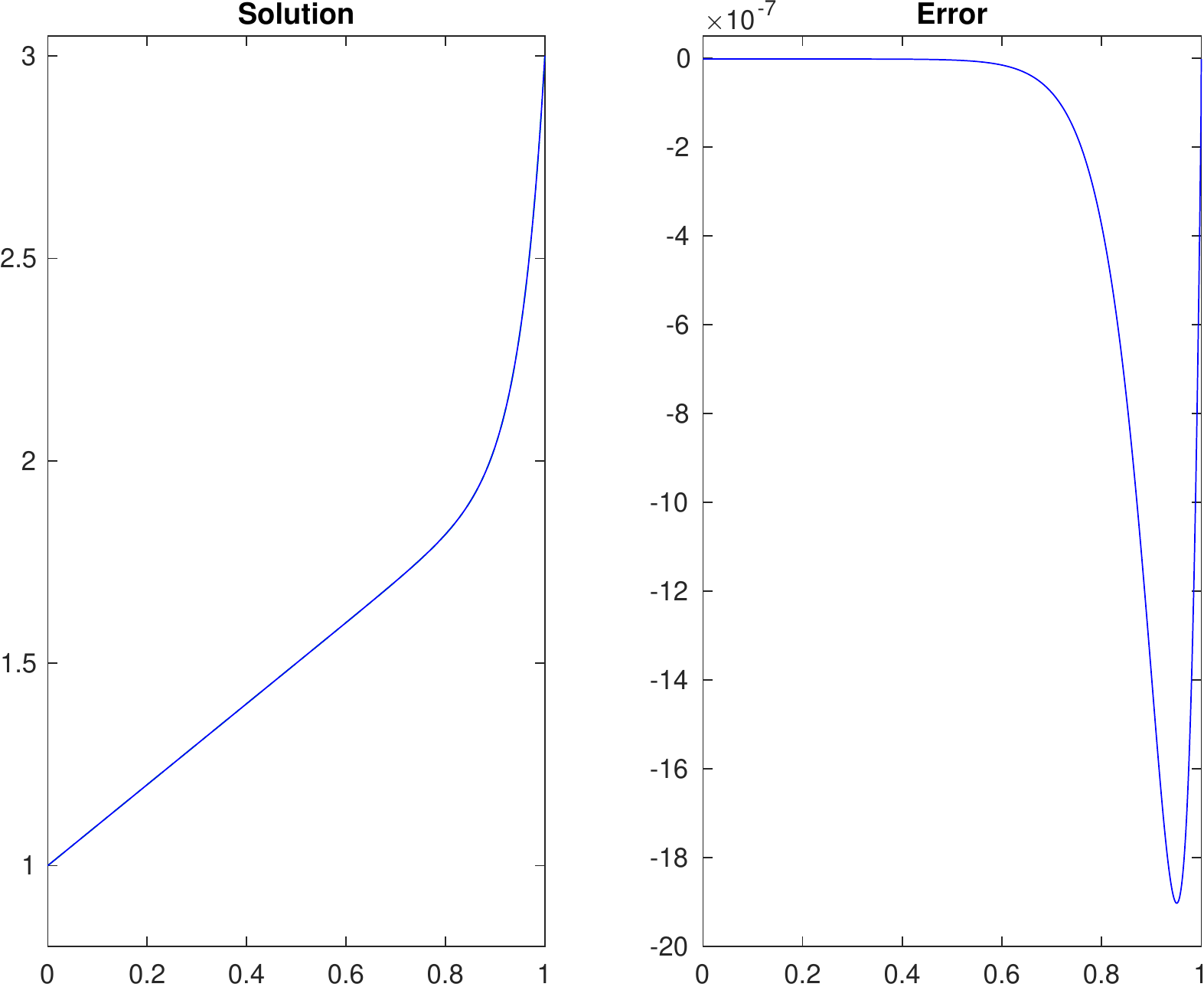}
\caption{Diagrams of  the two-grids method in one-dimension.  Left-top: for an interface problem; Left-bottom: for a boundary layer problem.  Right plots: A computed boundary layer solution and the error ($\sim 10^{-6}$) with a coarse mesh size $h=1/N=1/80$, and a fine mesh $h_f=h^2$ in $(1-2h, 1)$.  } \label{fig:1dgrid}
\end{figure}

For this example, we show the computed solution and error plot in Figure~\ref{fig:1dgrid} using a standard centered finite difference discretization. Let $h$ be a coarse grid  size and $N$ be the number of grid points in the interval $(0,\,1)$.  In the interval $[1-2h, 1]$, where $h=1/N$, we use a fine mesh $h_f=h^2$ for the discretization so that the central discretization has little effect on the solution even though that $|u'|$ is relatively large.

Next, we  use one-dimensional interface problems,
\eqm
  (\kappa \, u')'  +  K u = f(x) + C\delta(x-\alf) + \bar{C}\delta'(x-\alf) , \qquad   a < x < b,\quad a<\alf <b,
\enm
with two linear boundary conditions (Dirichlet, Neumann, Robin) at $x=a$ and $x=b$ to demonstrate the two-grid method. The problem is equivalent to
\eqm
  (\kappa  u')' +  K u = f(x), \qquad  [u]_{\alf} = \frac{2\bar{C}}{\kappa^- + \kappa^+ }, \quad [\kappa  u']_{\alf} = C, \quad a < x < b,
\enm
see \cite{REU10}. We assume that the coefficient $\beta$ is a piecewise constant,
\eqm
 \kappa =  \left \{ \begin{array}{ll}
\kappa^-    &   \mbox{if   $a< x<\alf$ } \\ \eqsp
  \kappa^+   &    \mbox{ if   $\alf < x <b$. }
  \end{array} \right.
\enm


Given a coarse grid parameter $N$, a grid ratio $r$,  and a width parameter $\lambda>0$, we can easily generate a two-grid. First, we define $h=(b-a)/N$,  and let the fine mesh size be $h_f = h/r$. The fine mesh is defined as $x_j= a + j h_f$   if $|x_j - \alf | \le  \lambda  h$. The coarse mesh is defined as those $x_i = a + h i$ and $|x_i - \alf | > \lambda  h$, see the left plot in Figure~\ref{tab:1D_E2_2} for an illustration.

The standard  fourth order compact scheme at a grid point $x_i$ for a uniform mesh, {\em i.e.} $x_{i+1} - x_i= x_i - x_{i-1}=h$,  is
\eqm
 \kappa  \frac{U_{i-1} - 2 U_i + U_{i+1} } { h^2}  = \frac{f_{i-1} + 10 f_i + f_{i+1}  } { 12}.
\enm

\subsection{The IIM scheme near the interface} \label{sec-IIM}

While it is possible to have fourth order schemes for regular one dimensional problems,  see for example, Chapter 7 in \cite{li:book}, it is difficult to obtain fourth order accurate schemes in two and three dimensions for discontinuous coefficients and curved interfaces.  One of the purposes of the two-grid method is to    
provide an alternative approach that can obtain comparable high order accuracy near the interface. We use a second order method near the interface with a finer mesh but a fourth order scheme away from the interface.

\begin{figure}[hpbt]
\bc
 \includegraphics[width=0.4\textwidth]{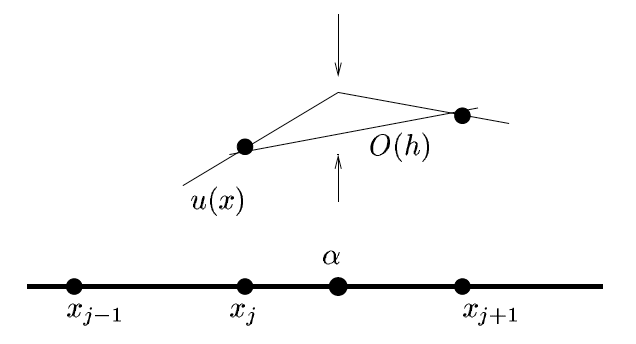}
\caption{A diagram of some  grid points and the interface $\alf$ that is
between $x_j$ and $x_{j+1}$.}
\label{1d_grid}
\ec
\end{figure}

Assume that the interface $\alf$ is between two grid points $x_j$ and $x_{j+1}$, that is, $ \le \alf < x_{j+1}$.  The interface is  one of the grid points when $x_j = \alf$, see Figure~\ref{1d_grid} for an illustration. The finite difference  equations at two irregular grid points $x_j$ and  $x_{j+1}$  have the following forms, see \cite{li:book},
\eqml{eq1d2.6}
&  \dsp \gamma _{j,1} U_{j-1} + \gamma _{j,2} U_{j}+ \gamma _{j,3} U_{j+1}
 + K U_{j}= f_{j} + C_j, \\  \eqsp
& \dsp \gamma _{j+1,1} U_{j} + \gamma _{j+1,2} U_{j+1}+ \gamma _{j+1,3}
U_{j+2} +K U_{j+1}
 = f_{j+1} + C_{j+1}.
\enml
The coefficients of the finite difference  equations  for the special case $\bar{C}=0$ have the following closed form:
\eqml{1dfd} &&   \left \{ \begin{array}{l}
  \dsp \gm{j,1} = (\kappa^- - [\kappa] {(x_{j} -\alf)}/{h_f} ) / D_j, \\ \eqsp
  \dsp  \gm{j,2} = (-2 \kappa^-+ [\kappa] {(x_{j-1} -\alf)}/{h_f} ) /D_j, \\ \eqsp
  \dsp \gm{j,3} = {\kappa^+ } /D_j, \end{array} \right. \\ \eqsp \eqsp
 &&   \left \{ \begin{array}{l}
 \dsp \gm{j+1,1} = {\kappa^-}/D_{j+1},  \\ \eqsp
 \dsp \gm{j+1,2} = (-2 \kappa^+ + [\kappa] {(x_{j+2} -\alf)}/h_f )/D_{j+1}, \\ \eqsp
 \dsp \gm{j+1,3} = (\kappa^+ - [\kappa] {(x_{j+1} -\alf)}{}/h_f )/D_{j+1},
\end{array} \right.
\enml
where
\eqml{Dj_Djp1}
D_j &=& \dsp h_f^2 + [\kappa](x_{j- 1}-\alf)(x_j-\alf)/2\kappa^-,\\ \eqsp
D_{j+1} &=& \dsp h_f^2 - [\kappa](x_{j+2}-\alf)(x_{j+1}-\alf)/2\kappa^+.
\enml
The correction terms are given by
\eql{eq1d2.22}
C_j = \gm{j,3}\, (x_{j+1}-\alpha)\,\fr{C}{\kappa^+}, \qquad
C_{j+1} = \gm{j+1,1}\,  (\alpha -x_{j})\,\fr{C}{\kappa^-}.
\en
The  stability and consistency of the finite difference scheme are discussed in \cite{li:book}.

\subsection{High-order compact FD discretization at boarder grid points}\label{subsec}

One of challenges in two-grid methods is to design a compatible high order finite difference  discretization at grid points that border the coarse and fine grids.
In this subsection, we assume that $\kappa$ and $K$ are constants.
 Let $x_i$ be such a grid. Assume that $x_i - x_{i-1} = h_1$ and $x_{i+1}-x_i= h_2$.
We design the following compact finite difference scheme,
\eqm
  \sum_{k=-1}^1 \alf_k U_{i+k} =  \sum_{k=-1}^1 \beta_k f_{i+k}, \qquad   \sum_{k=-1}^1 \beta_k = 1.
\enm
There are six unknowns, $ \alf_k $ and $\beta_k$, $k=-1,0, \, 1$. Define the `local truncation error' as
\eqm
  T_i = \sum_{k=-1}^1 \alf_k u(x_{i+k}) -  \sum_{k=-1}^1 \beta_k f(x_{i+k}),
\enm
assuming that $u(x)$ is the true solution. We want the finite difference scheme  to be exact for all fourth order polynomials if possible. To achieve this, we expand the right hand side above at $x_i$ to get
\eqmno
  T_i = \sum_{k=-1}^1 \alf_k \sum_{l=0}^4  \frac{u^{(l)} (x_i) }  {l !}  \hat{h}_k^l  -  \sum_{k=-1}^1 \beta_k \sum_{l=0}^2  \frac{f^{(l)}(x_i)}  {l !}  \hat{h}_k^l  + O\left(\| {\bfalf} \|_{\infty} h^5 + \| {\bfbeta} \|_{\infty} h^3 \right ),
\enmno
where $\hat{h}_{-1}=-h_1$, $\hat{h}_{0}=0$,  $\hat{h}_{1}=h_2$,  $h= \max\{h_1,h_2\}$, and $\| {\bfalf} \|_{\infty} = \max\left\{|\alpha_{i}|\right\}$ and so on. Note that the first and second order derivatives of $f$ can be expressed as the derivatives of the solution below,
\eqm
  f^{(1)}(x_i) = \kappa  u^{(3)} (x_i) + K u'(x_i) , \qquad  f^{(2)}(x_i) = \kappa  u^{(4)} (x_i) + K u''(x_i) ,
\enm
from the differential equation. Thus, by collecting corresponding terms involving $u^{(l)}$, $0\le l \le 4$, we get a system of equations $A x=b$ for the coefficients $ \alf_k $ and $\beta_k$, $k=-1,0,1$. Note that the constraint $\sum \beta_k=1$ is necessary to avoid  the trivial solution.
Thus, in the 1D case, we have the same number of unknowns and constraints and we have  uniquely determined the coefficients.
We list the coefficient matrix and the right hand side below for the special case $\kappa=1$, $K=0$.
\eqm
A = \left(\begin{array}{cccccc} 1 & 1 & 1 & 0 & 0 & 0\\ \eqsp -h_1 & 0 & h_2 & 0  & 0 & 0  \\ \eqsp \frac{h_1^2}{2}  & 0 &  \frac{h_2^2}{2}  & -1  & -1 & -1   \\ \eqsp
-\frac{h_1^3}{6} & 0 & \frac{h_2^3}{6} &h_1  & 0 & -h_2\\ \eqsp  \frac{h_1^4}{24} & 0 &  \frac{h_2^4}{24} & -\frac{h_1^2}{2}  & 0 & -\frac{h_2^2}{2} \\ \eqsp 0 & 0 & 0 & 1 & 1 & 1 \end{array}\right), \quad b=\left(\begin{array}{c} 0\\ 0\\ 0\\ 0\\ 0\\ 1 \end{array}\right).
\enm
The solution (the set of coefficients) has an analytic form that is obtained from the Maple symbolic package,
\eqml{coef}
&& \dsp \alf_{-1} = \frac{2}{h_1(h_1+h_2)}, \qquad \alf_1 = \frac{2}{h_2(h_1+h_2)}, \qquad \alf_0 = - \lp \alf_l + \alf_r  \rp = -\frac{2}{ h_1 h_2}, \\ \eqsp \eqsp
&& \dsp \beta_{-1} = \frac{h_1^2-h_2^2 + h_1 h_2}{6h_1(h_1+h_2)}, \qquad \beta_1 = \frac{- h_1^2+h_2^2 + h_1 h_2}{ 6h_2(h_1+h_2) }, \qquad \beta_0 =    \frac{h_2^2+h_1^2 + 3h_1 h_2}{ 6h_1 h_2}.
\enml
We can see the `symmetry' between the left and right grid points as expected.
Note that when $h_1=h_2$ we  recover the standard fourth-order compact formula. The finite difference coefficients satisfy the sign property needed for an M-matrix conditions. When $K\neq 0$, we can treat $Ku$ as a source term like $f$, that is, we add the term
\eqm
  K(\beta_{-1} U_{i-1} +  \beta_0 U_i + \beta_1 U_{i+1})
\enm
to the left hand side of the finite difference equation.

\subsection{A one-dimensional numerical example} \label{sub1dex}

This example is from \cite{li:book}:
\eqmno
    && (\kappa u_x)_x = 12 x^2, \quad 0 <x <1, 
  \quad  \kappa = \left \{ \begin{array}{ll}
       \kappa^- & \mbox{if $x<\alf$} \\ \eqsp
        \kappa^+  & \mbox{if $x>\alf$},
        \end{array}
\right. \\ \eqsp
 &&    \dsp  u(0)=0, \quad u(1) =\frac{1}{\kappa^+} + \lp
 \frac{1}{\kappa^-}-\frac{1}{\kappa^+}\rp \alf^4.
\enmno
In this example, $f(x)=12 x^2$ is continuous and the
natural jump conditions \index{natural jump conditions}
$[u]_{\alf} =0$ and $[\kappa u_x]_{\alf} =0$ are satisfied
across the interface $\alf$. The exact solution is
\eqmno
u(x) = \left \{ \begin{array}{ll}
       \dsp \frac{x^4}{\kappa^-}, & \mbox{if $x<\alf$}, \\ \eqsp
      \dsp  \frac{x^4}{\kappa^+}, & \mbox{if $x>\alf$}.
        \end{array}
\right.
\enmno

\begin{figure}[htbp]
\begin{minipage}[c]{2.75in}
     \includegraphics[width=1.1\textwidth]{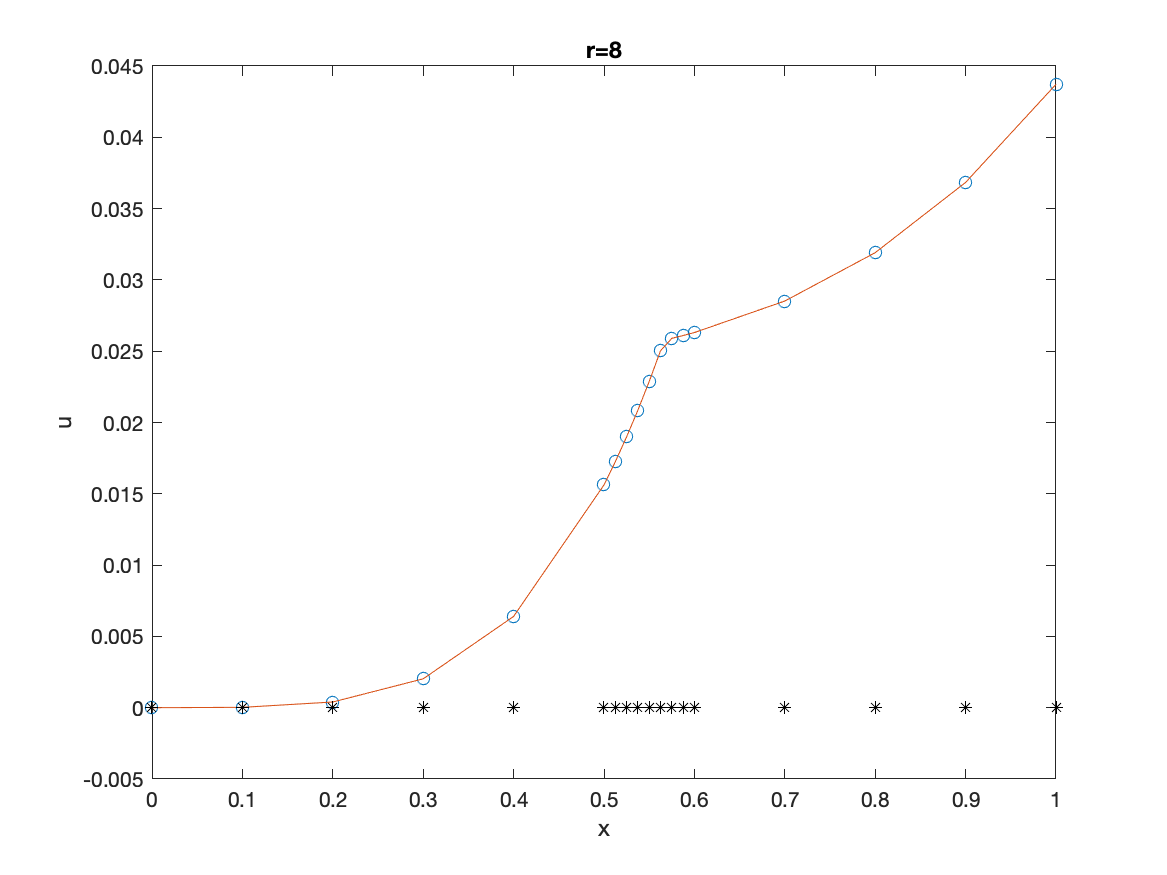}

{\small A two-grid along the $x$-axis,   the exact solution (solid line) and computed solution (little o's) with $N=10$, $\lambda=2$, and $r=8$.  The maximum error is $4.5343\times 10^{-7}$.}
\vthin

\end{minipage}
\begin{minipage}[t]{2.5in}
$\null$

\vspace{-3.65cm}
$\null \qquad$
\begin{tabular}{|c||c | c|}
\hline
     $N$ & $\|E \|_{\infty}$ (uniform)&    $\|E \|_{\infty}$ (two-grid) \\  \hline
           \multirow{4}{*}{10} &\multirow{4}{*}{$4.4333 \times 10^{-4} $}    & $ 7.286 1\times 10^{-5}$   \\ \cline{3-3}
      &  & $1.1428   \times 10^{-5}$     \\  \cline{3-3}
      &   &  $4.5343  \times 10^{-6}$    \\  \cline{3-3}
      &   &  $  6.1344 \times 10^{-7}$    \\ \hline
 \multirow{3}{*}{20} &\multirow{4}{*}{$ 1.3470 \times 10^{-4}$}  &    $ 1.0335  \times 10^{-5}$  \\  \cline{3-3}
    &  &  $4.5343 \times 10^{-6}$    \\  \cline{3-3}
   &   &  $  6.1344  \times 10^{-7}$   \\  \cline{3-3}
  &    &  $   2.8222 \times 10^{-7}$   \\   \hline
    \multirow{3}{*}{40} &\multirow{4}{*}{$ 4.1331 \times 10^{-5}$}  &   $  4.4768\times 10^{-6}$ \\  \cline{3-3}
  &   & $  6.1344 \times 10^{-7}$    \\  \cline{3-3}
   &  & $ 2.8222 \times 10^{-7}$    \\  \cline{3-3}
  &   & $   3.9592 \times 10^{-8}$     \\  \hline
     \end{tabular}
  \end{minipage}
  \caption{Computed results of the 1D example.} \label{tab:1D_E2_2}
 \end{figure}
\ignore{
\begin{minipage}[t]{1.0in}

\vspace{-2.5cm}
\begin{tabular}{|c||c |}
\hline
     $N$ &   Error \\  \hline
     \multirow{4}{*}{80}  & $  5.9917\times 10^{-7}$   \\  \cline{2-2}
          & $  2.7459 \times 10^{-7}$    \\  \cline{2-2}
     & $  3.9110 \times 10^{-8}$    \\  \cline{2-2}
     & $  2.4493 \times 10^{-8}$     \\  \hline
      \multirow{4}{*}{160}  & $  2.7189 \times 10^{-7}$   \\  \cline{2-2}
          & $ 3.8139 \times 10^{-8}$    \\  \cline{2-2}
     & $ 1.7667  \times 10^{-8}$    \\  \cline{2-2}
     & $  2.4493 \times 10^{-9}$     \\  \hline
 \multirow{4}{*}{320}  & $ 3.5959  \times 10^{-8}$   \\  \cline{2-2}
          & $  1.7398\times 10^{-8}$    \\  \cline{2-2}
     & $  2.4341 \times 10^{-9}$    \\  \cline{2-2}
     & $  1.1497 \times 10^{-9}$     \\  \hline

\end{tabular}
\end{minipage}   }
We use the  two-grid method to solve the  problem and show the numerical results in Figure~\ref{tab:1D_E2_2}. In the left plot, we can clearly visualize  the two grids in which the parameters are  $N=10, \alf=17/30, \lambda=2$, that is, we use a fine mesh in the interval $|x-\alf| \le 2 h$; with the refinement ratio $r=h/h_f=8$,  and $\kappa^-=4, \kappa^+=50$. The total number of grids points is $16$,  and the maximum error is $4.5343\times 10^{-7}$. In the right table, we carry out some grid refinement analysis with various coarse meshes and mesh ratio $r=2,4,8,16$. The average convergence order is $2.0875$ with respect to $h/r$ as expected. We observe better than fourth order convergence from $r=2$ to $r=8$. Note that the fine mesh is within a tube and thus,   it is more efficient in terms of the accuracy and complexity compared with that of a fourth high method using a uniform mesh.

\subsection{2D problems with line interfaces that are parallel to one of the axes}

Now we consider two dimensional Poisson equations with a line interface that is parallel to the $y$-axis
\eqm \label{2d-1d}
  u_{xx} + u_{yy} = f(x,y) + C \delta (x- \alpha) + \bar{C}  \delta' (x- \alpha),  \qquad a<x < b, \quad c<y<d,
\enm
where $C$ and $\bar{C}$ correspond to the jumps of $u$ and $\frac{\partial u}{\partial  x}$ across $\alpha$,  $[u]_{\alf} =  \bar{C}$ and $[u_x]_{\alf} =  {C}$. That is, all quantities in the $y$-direction are continuous.
More general problems with piecewise constant coefficients and curved interfaces will be discussed in the next section.

 In the literature, a two-grid in one space dimension is often referred to as the Shishkin mesh that has been employed to capture boundary layer effects, see for example, \cite{Shishkin-mesh,Shishkin-mesh-95,Shishkin-mesh-10,MR3284632}.
To solve \eqn{2d-1d}, we use a two-grid in the $x$-direction as discussed in the previous sub-section; and a uniform mesh in the $y$-direction. Then,
there are  three types of grid points: coarse grid points (black dotted), fine grid points
(blue dotted), and  boarder grid points (red dotted),  as demonstrated in Figure \ref{fig:2dmesh}.

For a coarse gird point $(x_i,y_j)$  that is outside of the strip $|x_i - \alf | > \lambda h$,  we use the standard compact
 nine-point finite difference scheme, 
see for example, \cite{strikwerda,li:book-FDFEM}.
The fourth order compact scheme for $u_{xx} + u_{yy} + KU= f(x,y)$  assuming the same step size in the $x$ and $y$ directions  can be written as
\eqm
\lp  L_h + K M_h\rp U_{ij}   = M_h f_{ij},
\enm
 where $L_h$ is the discrete nine-point  Laplacian operator whose coefficients at the four corners are $\frac{1}{6 h^2}$, at the
east-north-south-west grid points are $\frac{4}{6 h^2}$, and at the center is $-\frac{20}{\, 6 h^2}$; $M_h$ is an averaging  operator,
\eqm
  M_h f_{ij} = \frac{1}{12} \left ( \frac{\null}{\null} f_{i-1,j} + f_{i+1,j}+f_{i,j-1} + f_{i,j+1} + 8 f_{i,j}\right),
\enm
where $f_{ij} = f(x_i,y_j)$ and so on.

\begin{figure}[!tbp]
\centering
\includegraphics[width=0.4\textwidth]{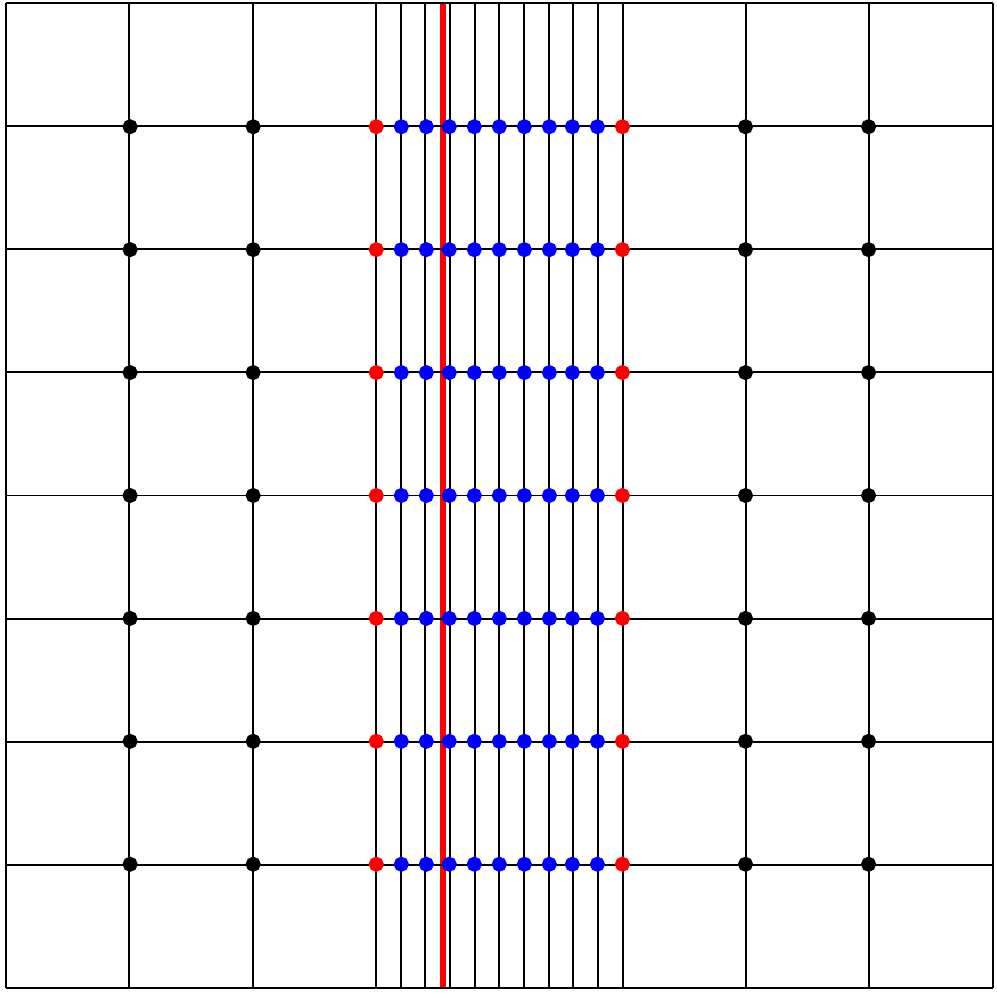}
\caption{A two-grid with a line interface (red solid).
There are three types of grid points: coarse grid points (black dotted), fine grid points in the $x$-direction (blue dotted), and  boarder grid points (red dotted) that  connec the coarse and fine meshes in the $x$-direction.
The refinement ratio is $r=5$, and the refinement tube width is $\lambda=1.5$ in the figure.}
\label{fig:2dmesh}
\end{figure}

In the strip $|x_i - \alf | \le  \lambda h$ as shown in Figure \ref{fig:2dmesh}, that is, a fine grid in the $x$ direction but the same coarse grid size in the $y$ direction,
 with the immersed interface method (IIM) as discussed  in Subsection~\ref{sec-IIM},   the finite difference equation  that is second order in $x$ and fourth order in $y$  in the tube can be written as
\eqm
  \frac{U_{i-1,j}- 2 U_{ij} + U_{i+1,j}}{h_f^2} + \lp 1 + \fr{h^2}{12} \delta_{yy}^2 \rp^{-1}
\delta_{yy}^2 U_{ij} = f_{ij} + C_{ij},
\enm
where $\delta_{yy}^2 $ is the standard three-point central finite difference operator in the $y$-direction and $C_{ij}$ is the correction term which is zero almost everywhere except at the  two grid points ($x_j$ and $x_{j+1}$) neighboring the interface~$\alf$.  Note also that $C_{ij}$  is independent of $y$.  Thus, the discretization can further be written as
\eqm
 \lp 1 + \fr{h^2}{12} \delta_{yy}^2 \rp  \frac{U_{i-1,j}- 2 U_{ij} + U_{i+1,j}}{h_f^2} +
\delta_{yy}^2 U_{ij} = \lp 1 + \fr{h^2}{12} \delta_{yy}^2 \rp   \lp \fr{\null}{\null} f_{ij} + C_{ij} \rp,
\enm
which leads to
\eqmno
 &&  \!\!\!\!\! \frac{U_{i-1,j}- 2 U_{ij} + U_{i+1,j}}{h_f^2}  +  C_{ij} + \frac{h_y^2}{12 h_f^2} \left ( \frac{\null}{\null} U_{i-1,j-1} - 2  U_{i-1,j} + U_{i-1,j+ 1} -2 U_{i,j-1} \right. \\ \eqsp
 && \quad   \left.   \frac{\null}{\null} + 4  U_{i,j}  - 2  U_{i,j+ 1}  +    U_{i+1,j-1} - 2  U_{i+1,j} + U_{i+1,j+ 1}   \right )  =  \frac{1}{12} \left ( f_{i,j-1} +10  f_{i,j} + f_{i,j+ 1}  \frac{\null}{\null}  \right ).
\enmno
The  finite difference equation involves the nine-point compact stencil, and   is second order accurate  in $x$ and fourth order accurate in $y$, respectively.

For the boarder grid points as shown in Figure \ref{fig:2dmesh}, following the similar derivation process for 1D boarder grid points in subsection \ref{subsec},
we can obtain a compact finite difference  equation at a boarder grid point $(x_i, y_j)$. The finite difference coefficients for $U_{i,j}$ and the
linear combination of $f_{i,j}$ are listed below,
        {$$
        U_{i,j}:\quad
        \frac{1}{6 \sigma h_y^2}
        \begin{bmatrix}
          h_2 (h_1 h_2 + h_y^2 + \mu)       & (h_1 + h_2) (3 h_1 h_2 - h_y^2 + \nu) & h_1 (h_1 h_2 + h_y^2 - \mu) \\
          -2 h_2 (h_1 h_2 - 5 h_y^2 + \mu)  & -2 (h_1 + h_2) (3 h_1 h_2  + 5 h_y^2 + \nu) & -2 h_1 (h_1 h_2 - 5 h_y^2 - \mu) \\
          h_2 (h_1 h_2  + h_y^2 + \mu)      & (h_1 + h_2) (3 h_1 h_2 - h_y^2 + \nu ) & h_1 (h_1 h_2 + h_y^2 - \mu)
        \end{bmatrix},
        $$}
        $$
        f_{i,j}:\quad \frac{1}{12 \sigma}
        \begin{bmatrix}
             0          &   \sigma        & 0    \\
             2 h_2 (h_1 h_2 - \nu) & 2 (h_1 + h_2)^3  & 2 h_1 (h_1 h_2 + \nu)   \\
             0          &   \sigma        & 0
        \end{bmatrix},
        $$
where $h_{1} = x_i - x_{i-1}, h_{2} = x_{i+1} - x_i, h_y = y_{j} - y_{j-1} = y_{j+1} - y_j$ and $\sigma = h_1 h_2 (h_1 + h_2),\ \mu = h_1^2 - h_2^2,\ \nu = h_1^2 + h_2^2.$

Now we have defined finite difference equations at all grid points with local truncation errors being $O(h^4+h_f^2)$. The discretize system of equations satisfy the discrete maximum principle, and thus, the global accuracy is also of $O(h^4+h_f^2)$.

\begin{example}
  This example is adapted from the 1D example in Subsection \ref{sub1dex}. A Poisson equation with a line interface $x=\alf=\frac{33}{70}$ on a domain $0<x<1$, $0<y<1$ with the true solution,
\eqm
 u(x,y) = \left\{ \begin{array}{ll}
x (\alf-1) + \sin (\pi y) & \textrm{if $x \leq \alf$}\\ \eqsp
\alf (x-1) +  \sin (\pi y) & \textrm{if $x>\alf$.}
\end{array} \right.
\enm
{\em The source term is $f(x,y) = -\pi^2  \sin (\pi y)$. Across the interface, the jump conditions are $[u]_{x=\alf}=0$ and $\left[\frac{\partial u}{\partial x} \right]_{x=\alf}= 1$. }
\end{example}

\begin{figure}[htbp]
\begin{minipage}[c]{3in}
     \includegraphics[width=1.1\textwidth]{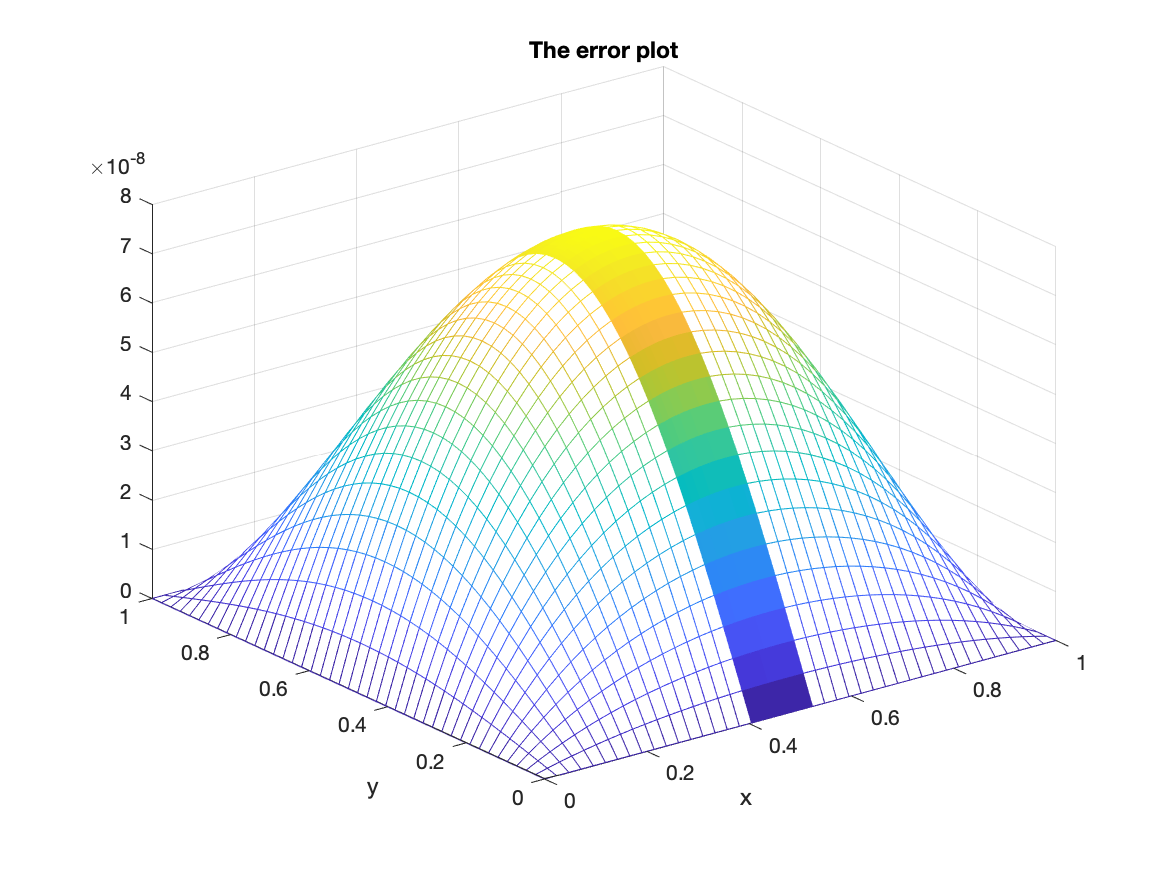}

\vspace{-0.45cm}
{\small A two grid along the $x$-axis and the error plot with $N=42$, $ \lambda =2$, and $h_f =h^2$.  The maximum error is $7.8476\times 10^{-8}$.}
\vthin

\end{minipage}
$\null \qquad$
\begin{minipage}[t]{2.3in}
$\null$

  \vspace{-1.65cm}
$\null \quad$
\begin{tabular}{|c|| c|c|}
\hline
     $N$ &   $\|E \|_{\infty}$(two-grid) & order \\  \hline
     $6$ & $  1.6355 \times 10^{-4} $ &  \\  \hline
     $12$ & $ 1.1807 \times 10^{-5} $ &3.9585   \\  \hline
     $24$ & $ 7.3631 \times 10^{-7} $ & 4.0032 \\  \hline
     $42$ & $ 7.8476  \times 10^{-8} $ & 4.1156\\  \hline
   \end{tabular}

 \vthin

 \vthin

 A grid refinement analysis for the 2D example with a line interface.  Fourth order convergence is confirmed  when $h_f= h^2$.

  \end{minipage}
  \caption{An error plot of the computed solution of the 2D example with a line interface in which two grids in the $x$ direction are visible.  } \label{tab:2D_1D}
 \end{figure}

In the left plot of Figure~\ref{tab:2D_1D},  we show  an error plot with $N=42$ and $h_f = h^2$  in which  two grids with different mesh sizes  are also visible.
 In the right table, we show a grid refinement analysis which indicates fourth order convergence in terms of $h$ when $h_f = h^2$ as expected.

\section{A two-grid  method for general 2D interface problems} \label{sec:2D}

For general two-dimensional elliptic interface problems, it is much more challenging to develop  two-grid methods  with general curved interfaces. We  consider a general two-dimensional elliptic interface problem of the  following form,
\eqm
\dsp \nabla \cdot \left(\kappa \nabla u \right ) + K u =  f(\mathbf{x}), \quad   \mathbf{x}\in R,
\enm
together with jump conditions across interface $\Gamma$
\eqm \label{elliptic-jump}
\dsp \left [ u \right]_{\Gamma}   = w(s), \qquad \left [ \kappa  \frac{\partial u}{\partial n} \right]_{\Gamma} = v(s),
\enm
and a boundary condition along $\partial R$, where $R$ is a rectangular domain,
$\Gamma\in C^2$ is a closed smooth interface that can be parameterized by a one-dimensional variable $s$, say the arc-length. Within the domain $R$; $w(s)\in C^2$ and $v(s)\in C^1$ are two functions defined along the interface $\Gamma$. Note that, when $w(s)=0$, the problem can be written as
 \begin{eqnarray}
  && \dsp \nabla \cdot \left(\kappa \nabla u \right )  + K  u =  f(\mathbf{x}) + \int_{\Gamma} v(\mathbf{X}(s) )\delta\left ( \mathbf{x}-\mathbf{X}(s) \right ) ds , \quad   \mathbf{x}\in R.
\end{eqnarray}
Second order methods  on Cartesian grids have been developed in the literature. The challenge here is to develop higher (third or above) order compact methods.
It is more useful but challenging for the case when $\kappa$ is discontinuous.
In this paper, we propose an alternative approach to a uniform mesh, that is, a two-grid method to obtain high order accurate methods, which is not only simpler but also avoids higher order derivatives required for jump conditions along the interface as well as the source terms and the solution.

We assume that the interface problem is defined in a rectangular domain
$\Omega = [a,\,b]\times [c,\,d]$. We start with a coarse Cartesian grid,
$x_i = a + ih$, $y_j=c+jh$, $i=0,1,\cdots,M$, $j=0,1,\cdots, N$.
The interface $\Gamma$ is implicitly represented by the zero level set of a Lipschitz
continuous function $\varphi(x,y)$:
\eqm
\Gamma = \left \{\,  ( x, y ), \fr{\null}{\null}\varphi(x,y)=0 \right \}.
\enm
 In the discrete case, $\varphi(x,y)$ is defined at grid points as $\varphi(x_i,y_j)$.
Often $\varphi(x,y)$ is the signed distance function from~$\Gamma$, or its approximations.

\begin{figure}[hpbt]
\centering
\includegraphics[width=0.4\textwidth]{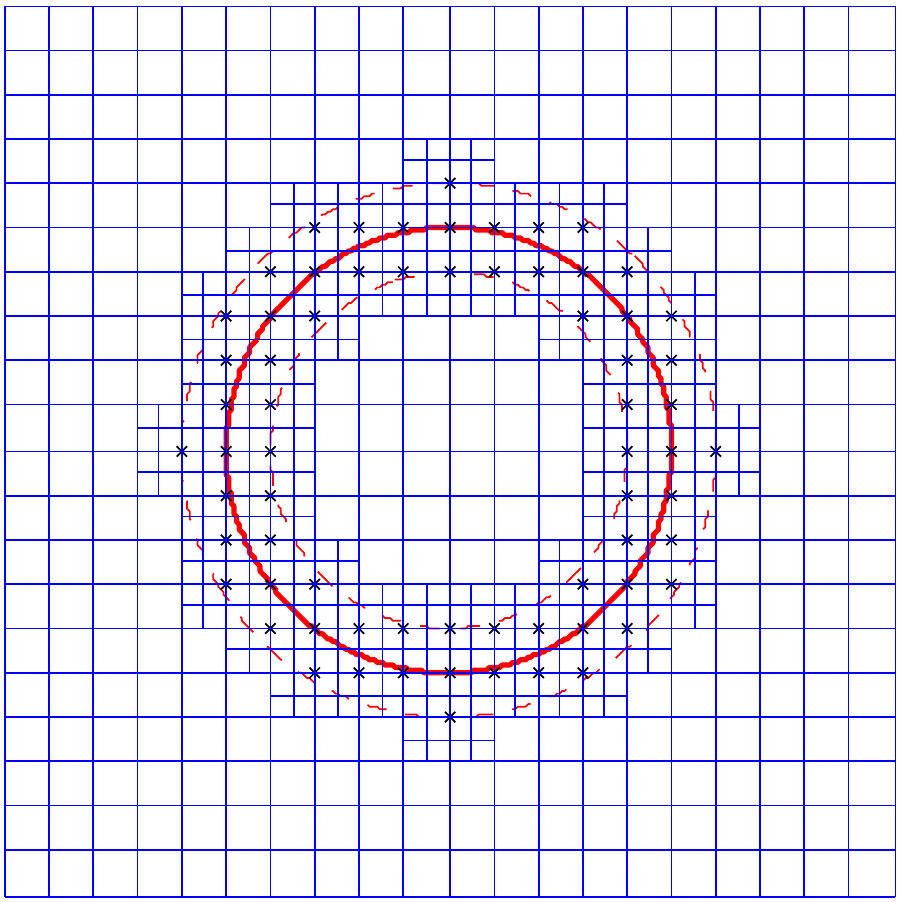}
\caption{A two-grid with a circular interface (red solid).
Parent grid points (starred) are selected within the tube
$|\varphi_{ij}|=|\varphi(x_i,y_j)|\le h$ (red dashed).
The refinement ratio is $r=2$, and the refinement tube width is $\lambda=1$.}
\label{fig:amr_mesh}
\end{figure}

To generate a finer mesh around the interface $\Gamma$, we first select
parent grid points within a tube of the interface according to \eqm
 |\, \varphi(x,y)\, | \leq \lambda h,
\enm where $\lambda$ is a control coefficient to adjust the
width of the refinement tube. When a grid point $\textbf{x}_{ij}=(x_i,y_j)$ within the tube is selected, we build a refined mesh with  a finer  mesh $h/r$ ($r$ is refinement ratio, $r=2$ or $4$, $\cdots$) within the square: $|x-x_i|\le h$ and $|y-y_j| \le h$.
Generating a refined square for every such grid points yields a refined
region around the interface.  
Figure~\ref{fig:amr_mesh} shows an example of the refinement mesh around a
circular interface.
The diagram at the right in Figure~\ref{fig:hanging_diag} is an illustration of some grid points in the coarse and finer grid with $r=4$.

Once we have the two grids, we use the standard fourth-order compact finite difference scheme at interior coarse grid points; and the standard second order method at interior regular fine grid points; and second order maximum principle immersed interface method (IIM) \cite{li-ito,li:book} at the fine irregular grid points within the tube. In all the three cases, the finite difference coefficients satisfy the sign property that guarantee the discrete maximum principle.

\subsection{A new super-third order FD discretization for hanging nodes}

One of the challenges for two-grid or adaptive mesh methods is how to treat  hanging nodes, which has been intensively discussed in the literature. There are two main concerns about  the  discretization: accuracy, and the stability that is often more difficult to study. One sufficient (stronger) condition is to check the sign property of  finite difference coefficients.
Our new idea is to use  the values of the solution,  the source term $f$, and the partial differential equations to derive  finite difference  equations that can be accurate for highest polynomials possible while satisfying the sign property.
We also want the finite difference scheme to be robust so that the coefficients just need to be computed once for PDEs with constant coefficients.
\begin{figure}[htbp]
\centering
     \includegraphics[width=0.4\textwidth]{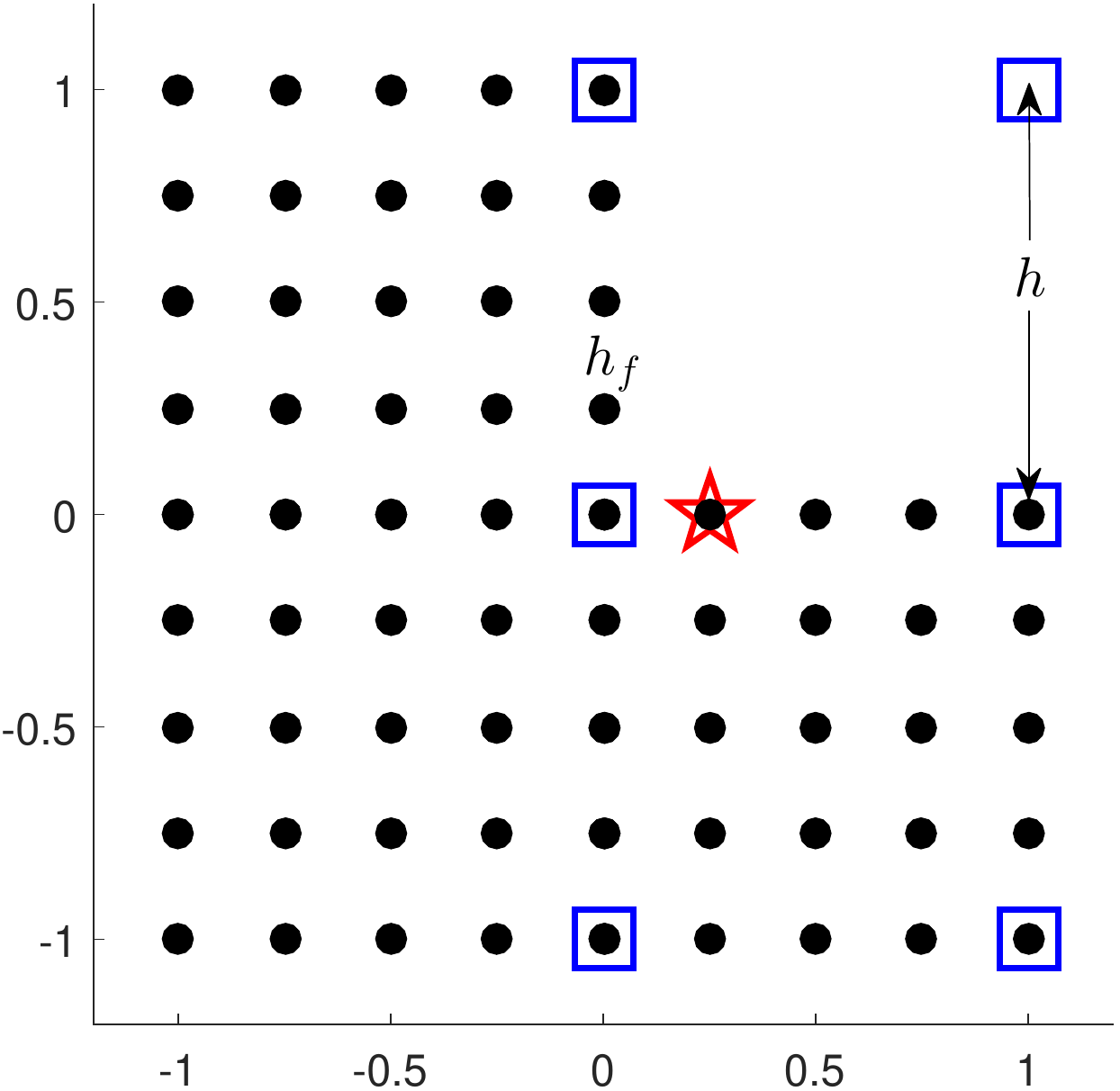}
     \caption{A schematic diagram of 7-point finite difference stencil for  a typical hanging node (marked as a red five-star) when $r=4$ ($h_f=h/4$).}\label{fig:hanging_diag}
\end{figure}

There are various approaches in designing finite difference equations at hanging nodes in the literature. One commonly used strategy is to use ghost grid points\ignore{as marked as '0's  in  the right diagram in Figure~\ref{fig:hanging_diag}},   and then apply interpolation schemes. We refer the readers to \cite{li_song_AMR}  and references therein. Often, the developed finite difference equations  are consistent. However, it is difficult to show the stability,  and the convergence of the finite difference method.

In this paper, we employ a new strategy, see also \cite{li-pan-hoc21},  and some new ideas to develop finite difference equations at  hanging nodes for elliptic PDEs
\eqm
 \kappa \Delta  u  + K u =  f(\mathbf{x}),
\enm
assuming that $\kappa$ and $K$ are constants. We use the left diagram in Figure~\ref{fig:hanging_diag} to illustrate the method in which $r=4$. We develop a finite different equation for the hanging node  $(x_{i_c},y_{j_c})$ (marked as a red five-star). In the figure, solid black circles are fine grid points,  small rectangles are coarse ones. A grid point can be both fine and coarse.  In fact, for the PDE above with constant coefficients, the coefficients of the finite difference equation and the weights for the source term are invariant in terms of  the relative locations and the PDE. So we just need to use a reference geometry in the derivation as illustrated in Figure~\ref{fig:hanging_diag}, in which
there are three such hanging nodes along the line $x=0$, and another three along the line $y=0$.  %

After some research, testing,  and reasoning based on symmetry arguments, we come to a conclusion that a seven-point stencil as demonstrated in  Figure~\ref{fig:hanging_diag}  is an ideal choice as we can see later in this section.
In order to derive  a high order compact (HOC) finite difference equation (7-point stencil) at the hanging node (marked as the red star), we use its neighboring six grid points (marked in blue square). The finite difference stencil  has one coarse grid point, and six coarse and fine ones.
 Denote the coordinates at hanging node $(x_{i_c},y_{j_c})$ as $(h_2,0)$.
Then, the grid points involved are $(0, \pm h)$, $(h,\pm h)$, $(0,0)$, $(h, 0)$ and $(h_2,0)$.
Note that $h_2 = j h_f$ for $j=1,2,\cdots, r-1$.
It is obvious that the finite difference  equations for hanging points with $j=k$ and $j=r-k$ are  the same using the symmetry arguments, which has been verified by symbolic computations. We design the finite difference equation as:
\eqml{4th-hoc}
&& \!\!\!\!  \!\!\!\!  \dsp  \sum_{i_k=0}^{1} \sum_{j_k=-1}^{1} \!\!  \alpha_{i_k,j_k} U_{i+i_k,j+j_k}   +  \alpha_{i_c,j_c} U_{i_c,j_c} =  \sum_{i_k=0}^{1} \sum_{j_k=-1}^{1}  \!\beta_{i_k,j_k} \, f(x_{i+i_k},y_{j+j_k}) +
\beta_{i_c,j_c} \, f(x_{i_c},y_{j_c}) , \\ \eqsp
  && \dsp  \sum_{i_k=-1}^1 \sum_{j_k=-1}^1 \!\!\! \beta_{i_k,j_k}  +  \beta_{i_c,j_c} = 1.
\enml
Note  that the {\em indexes} \ are not real ones for actual algorithms but just for the derivation of the finite difference equation. The coefficients are just needed to be computed once for all for constant $\kappa$ and $K$.

Define the `local truncation error' as
\eqml{truc}
 T_{ {i_c},{j_c}} & = & \dsp  \sum_{i_k=0}^{1} \sum_{j_k=-1}^{1} \!\!  \alpha_{i_k,j_k} u(x_{i+i_k},y_{j+j_k} )  +  \alpha_{i_c,j_c}  u(x_{i_c},y_{j_c} )  \\ 
&& \null  \quad \qquad \dsp - \sum_{i_k=0}^{1} \sum_{j_k=-1}^{1}  \!\beta_{i_k,j_k} \, f(x_{i+i_k},y_{j+j_k}) -\beta_{i_c,j_c} \, f(x_{i_c},y_{j_c}) ,
\enml
where $u(x,y)$ is the true solution to the boundary value problem.
To determine the finite difference coefficients $\alpha$'s and the weight coefficients $\beta$'s, we expand all $u$ and $f$ terms at $(x_{i_c},y_{j_c})$ up to all fourth order derivatives of $u(x,y)$ and second order derivatives of $f(x,y)$; then we represent $f(x,y)$ terms in terms of $u(x_c,y_c)$ and its partial derivatives at the $(x_c,y_c)$ using the following relations,
\begin{equation}
 \frac{\partial^{k_1+k_2}  f}{\partial x^{k_1} \partial y^{k_2} }  = \kappa\left(  \frac{\partial^{k_1+k_2+2} u }{\partial x^{k_1+2} \partial y^{k_2} }  +  \frac{\partial^{k_1+k_2+2}  u }{\partial x^{k_1} \partial y^{k_2+2} } \right )  + K  \frac{\partial^{k_1+k_2} u }{\partial x^{k_1} \partial y^{k_2} } , \quad 0\le  k_1+k_2\leq 2.
\end{equation}
With the above relations, we can rewrite the local truncation error as
\eqm
  T_{ {i_c},{j_c}} = \sum_{ 0 \leq k_1+k_2 \leq 4}  \!\!\!\!\! L_{k_1,k_2} \, \left.   \frac{\partial^{k_1+k_2}  u}{\partial x^{k_1} \partial y^{k_2} } \right |_{(x_c,y_c)}
  + O\left(\| {\bfalf} \|_{\infty} h^5 + \| {\bfbeta} \|_{\infty} h^3 \right ),
\enm
where $\| {\bfalf} \|_{\infty}$ is the maximum norm of all $\alpha$'s coefficients and so on.
 We want  $T_{ {i_c},{j_c}}$ to be as small as possible in magnitude in terms of $h$.
 So we set the coefficients of $\frac{\partial^{k_1+k_2}  u}{\partial x^{k_1} \partial y^{k_2} }$
 to be zero for all the  $h^k$ coefficients,  which leads to a linear system of equations for the coefficients $\alpha$'s and the weight coefficients $\beta$'s.
 The equality constraint in \eqn{4th-hoc} prevents the trivial solution (all zero coefficients).
\ignore{ Although we have the same number of  unknowns (16)  as the number of  equations} If we make all the coefficients of  $\frac{\partial^{k_1+k_2}  u}{\partial x^{k_1} \partial y^{k_2} }$  to be zero for all $0\le k_1+k_2\le 4$,
we would obtain 16 equations with 14 unknowns (7 $\alpha$'s and 7 $\beta$'s). It turns out that the system of equations is not consistent. Thus, we give up two equations corresponding to $x^4$ and $y^4$ terms to have $14$ equations and $14$ unknowns. The system of equations then is consistent and have infinity number of solutions. We can impose some additional conditions such as the symmetry, sign property to get  desired  solutions.

\begin{table}[ht]
\caption{Finite difference coefficients $h^2 \alf_{i_k, j_k}$ at hanging point $(j h_f, 0)(j=1,2,\cdots, r-1)$ for $r=2$, $4$, $8$.
The finite difference coefficients for $j=k$ are the same as those for $j=r-k$ except that they are in the reverse order, 
and the coefficients are the same for any $j$ and $r$'s when $j/r = \textrm{constant}$.} \label{hanning_coef_u}
\begin{center}
\begin{tabular}{|c|c|c c c c c c |c |}
    \hline
    $r$  & $j$ & \multicolumn{6}{c|}{$h^2 \alf_{i_k,j_k}$} & $h^2 \alf_{i_c, j_c}$ \\ \hline
    2 &1&  { \color{blue} 1/2 }    &   { \color{blue} 1/2 }   &       { \color{blue}  3  }   &         { \color{blue} 3   }   &       { \color{blue}  1/2  }   &      { \color{blue}  1/2  }     &    { \color{blue} -8 }   \\  \hline
     \multirow{3}{*}{4}
   & 1& \textcolor[rgb]{1,0,0}{7/12}    &  \textcolor[rgb]{0.9,0,0}{   5/12  }    & \textcolor[rgb]{0.8,0,0}{   41/6 }    &    \textcolor[rgb]{0.7,0,0}{   11/6  }    &   \textcolor[rgb]{0.6,0,0}{   7/12  }   &  \textcolor[rgb]{0.5,0,0}{   5/12 }   &  { \color{red}   -32/3 }  \\
   & 2&  { \color{blue} 1/2 }    &   { \color{blue} 1/2 }   &       { \color{blue}  3  }   &         { \color{blue} 3   }   &       { \color{blue}  1/2  }   &      { \color{blue}  1/2  }     &    { \color{blue} -8 }   \\
   & 3& \textcolor[rgb]{0.5,0,0}{   5/12  }    &  \textcolor[rgb]{0.6,0,0}{   7/12}     &    \textcolor[rgb]{0.7,0,0}{  11/6}     &      \textcolor[rgb]{0.8,0,0}{ 41/6}      &    \textcolor[rgb]{0.9,0,0}{   5/12 }     &    \textcolor[rgb]{1,0,0}{  7/12  }   &      { \color{red}   -32/3 } \\  \hline
    \multirow{7}{*}{8}
    & 1& 5/8       &     3/8        &   59/4      &     43/28      &     5/8       &     3/8    &      -128/7  \\
    & 2& \textcolor[rgb]{1,0,0}{7/12}    &  \textcolor[rgb]{0.9,0,0}{   5/12  }    & \textcolor[rgb]{0.8,0,0}{   41/6 }    &    \textcolor[rgb]{0.7,0,0}{   11/6  }    &   \textcolor[rgb]{0.6,0,0}{   7/12  }   &  \textcolor[rgb]{0.5,0,0}{   5/12 }   &  { \color{red}   -32/3 }  \\
    & 3& \textcolor[rgb]{0,1,0}{13/24}    &      \textcolor[rgb]{0,0.9,0}{11/24}     &     \textcolor[rgb]{0,0.8,0}{17/4 }   &      \textcolor[rgb]{0,0.7,0}{137/60  }  &      \textcolor[rgb]{0,0.6,0}{13/24 }    &     \textcolor[rgb]{0,0.5,0}{11/24 }   &   \textcolor[rgb]{0,1,0}{ -128/15 }\\
    & 4&  { \color{blue} 1/2 }    &   { \color{blue} 1/2 }   &       { \color{blue}  3  }   &         { \color{blue} 3   }   &       { \color{blue}  1/2  }   &      { \color{blue}  1/2  }     &    { \color{blue} -8 }   \\
    & 5& \textcolor[rgb]{0,0.5,0}{11/24  }    &   \textcolor[rgb]{0,0.6,0}{ 13/24 }   &   \textcolor[rgb]{0,0.7,0}{  137/60  }    &   \textcolor[rgb]{0,0.8,0}{ 17/4}      &     \textcolor[rgb]{0,0.9,0}{11/24  }   &    \textcolor[rgb]{0,1.0,0}{ 13/24}    &  \textcolor[rgb]{0,1,0}{   -128/15} \\
    & 6& \textcolor[rgb]{0.5,0,0}{   5/12  }    &  \textcolor[rgb]{0.6,0,0}{   7/12}     &    \textcolor[rgb]{0.7,0,0}{  11/6}     &      \textcolor[rgb]{0.8,0,0}{ 41/6}      &    \textcolor[rgb]{0.9,0,0}{   5/12 }     &    \textcolor[rgb]{1,0,0}{  7/12  }   &      { \color{red}   -32/3 } \\
    & 7&       3/8      &      5/8      &     43/28   &       59/4    &        3/8      &      5/8      &    -128/7 \\  \hline
\end{tabular}
\end{center}
\end{table}

\begin{table}[ht]
\caption{Weight coefficients $\beta_{i_k,j_k}$ at hanging point $(j h_f, 0)$ for different $j/r(r = 2,4,8)$.  } \label{hanning_coef_f}
\begin{center}
\begin{tabular}{|c|c c c c c c |c |}
    \hline
     $j/r$ & \multicolumn{6}{c|}{$\beta_{i_k,j_k}$} & $\beta_{i_c, j_c}$ \\ \hline
     1/2& 0& 0&   \textcolor{blue}{1/2}&   \textcolor{blue}{1/2}& 0& 0&0 \\
     1/4& 0& 0&  \textcolor{red}{7/12}&  \textcolor{red}{5/12}& 0& 0&0 \\
     1/8& 0& 0&   5/8&  3/8& 0& 0&0 \\
     3/8& 0& 0& \textcolor{green}{13/24}& \textcolor{green}{11/24}& 0& 0&0 \\
\hline
\end{tabular}
\end{center}
\end{table}

With the help of the Matlab symbolic package, we have analytically found one `good' set of  finite difference coefficients and  weights of the source term when $\kappa=1$ and $K=0$ as listed in Table~\ref{hanning_coef_u} and Table~\ref{hanning_coef_f} for $r=2,4,8$.
The last column in Table~\ref{hanning_coef_u}  lists the coefficients $\alpha_{i_c,j_c}$ at the diagonals. 
The coefficients for $r=16$ are listed in Table \ref{r12}.
From these talbles, we have verified  following properties of the finite difference scheme, which we referred as a `good' set of the coefficients.
\bi
 \item The finite difference coefficients satisfy the sign property, that is, all diagonals are negative while off-diagonals are positive. So the coefficient matrix for the finite difference equations is an M-matrix.
 \item  The finite difference coefficients and weights at hanging point $(j h_f, 0)$ are the same for any $j$ and $r$ with $j/r = \textrm{constant}$.
\item The finite difference coefficients and weights at hanging point $(j h_f, 0)$ for $j=k$ are the same as those for $j=r-k$ except they are in the reverse order.
 \item Only two weight coefficients are nonzero: $\beta_{0,1}$ and $\beta_{1,1}$, which are positive and their sum is one.
  \item The discretization at a hanging node is super-third order accurate. In our numerical tests,  the errors  from the discretization at hanging nodes are  less dominant compared with those at irregular grid points near the interfaces in the finer mesh.
\ei

\begin{table}[ht]
\caption{Finite difference coefficients $h^2 \alf_{i_k, j_k}$ and weight coefficients  $\beta_{i_k, j_k}$ at the hanging grid point $(j h_f, 0)(j=1,2,\cdots, r/2)$  when  $r=16$. The coefficients for $j=9,10, \cdots, 15$, which are not listed here, are the same as those for $j=7,6,\cdots, 1$, respectively.
Similar to the cases for $r=2, 4, 8$ (see Table \ref{hanning_coef_f}), only two weight coefficients $\beta_{0,0}$ and $\beta_{1,0}$ are nonzero. }\label{r12}
\begin{center}
\begin{tabular}{|c|c c c c c c |c | cc |}
    \hline
     $j$ & \multicolumn{6}{c|}{$h^2 \alf_{i_k,j_k}$} & $h^2 \alf_{i_c, j_c}$  & $\beta_{0,0}$ & $\beta_{1,0}$ \\ \hline
     1&  31/48& 17/48& 737/24&   57/40& 31/48& 17/48& -512/15& 31/48& 17/48 \\
     2&    5/8&   3/8&   59/4&   43/28&   5/8&   3/8&  -128/7&   5/8&   3/8 \\
     3&  29/48& 19/48& 227/24& 521/312& 29/48& 19/48& -512/39& 29/48& 19/48 \\
     4&   7/12&  5/12&   41/6&    11/6&  7/12&  5/12&   -32/3&  7/12&  5/12 \\
     5&   9/16&  7/16& 211/40&  179/88&  9/16&  7/16& -512/55&  9/16&  7/16 \\
     6&  13/24& 11/24&   17/4&  137/60& 13/24& 11/24& -128/15& 13/24& 11/24 \\
     7&  25/48& 23/48&593/168&  187/72& 25/48& 23/48& -512/63& 25/48& 23/48 \\
     8&    1/2&   1/2&      3&       3&   1/2&   1/2&      -8&   1/2&   1/2 \\   \hline
\end{tabular}
\end{center}
\end{table}

\subsection{Convergence analysis}

Now we discuss the convergence of the two-grid method. The proof is based on the discrete maximum principle characterized by the M-matrix property and the local truncation errors.

\begin{theorem} \label{thm-A}
Let $U_{ij}$ be the finite difference solution obtained from the two-grid method for $\kappa \Delta u = f$ with a linear boundary condition (Dirichlet, Neumann, or Robin), and at least one point  Dirichlet  boundary condition.
 Assume that the finite difference coefficients satisfy the sign property at all the grid points with $\| {\bfalf} \|_{\infty} \le C/h^2$ and $ \| {\bfbeta}\|\le C$,
  and the solution to the elliptic interface problem is {\em piecewise smooth} with the needed regularity, then,
\eqm
  \left \| u(x_i,y_j)- U_{ij} \right \|_{\infty}  \le C \left ( O(h^3) + O\left ({h/r}\right )^2  \right ).
\enm
Here, $C$ is a generic constant, $h$ is the step size of the coarse mesh and $h/r$ is that of the fine mesh.
\end{theorem}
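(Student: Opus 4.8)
Proof proposal for Theorem \ref{thm-A}:

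\medskip

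The plan is to combine the standard discrete-maximum-principle/M-matrix machinery with the local truncation error estimates already established for each of the three types of grid points. First I would set up the global linear system: write the finite difference equations over all unknowns (coarse interior points, fine interior/irregular points, and hanging nodes) as $A_h U = F_h$, where $A_h$ collects the coefficients $\alpha$ at each grid point. The hypotheses guarantee that every row of $A_h$ has a negative diagonal, nonnegative off-diagonals, and—because $\sum_k \beta_k = 1$ with the PDE $\kappa\Delta u = f$ having no zeroth-order term (or $K u$ absorbed consistently)—the rows are weakly diagonally dominant, with strict dominance propagating from the rows touching the Dirichlet boundary. Hence $A_h$ is an irreducibly diagonally dominant M-matrix, so $A_h^{-1} \ge 0$ entrywise and the discrete comparison principle holds.

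\medskip

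Next I would introduce the error vector $E_{ij} = u(x_i,y_j) - U_{ij}$ and apply $A_h$ to it, obtaining $A_h E = T$, where $T$ is the vector of local truncation errors. Here is where the earlier analysis is invoked term-by-term: at regular coarse points the compact nine-point scheme gives $|T| = O(h^4)$; at regular fine points and at the IIM irregular points near $\Gamma$ the maximum-principle IIM of \cite{li-ito,li:book} gives $|T| = O((h/r)^2)$ (recall the IIM truncation error is $O(1)$ only at the $O(1/h)$-many irregular points but $O((h/r)^2)$ elsewhere in the fine region, and the one-step-lower local error at irregular points is compensated in the usual IIM way because those equations still satisfy the sign property—see the stability/consistency discussion cited after \eqref{eq1d2.22}); at the border grid points the scheme of Subsection~\ref{subsec} is exact on cubics so $|T| = O(h^3)$; and at hanging nodes, since two of the sixteen moment equations ($x^4$ and $y^4$) are dropped, the discretization is "super-third", i.e. $|T| = O(h^3)$. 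Taking the max over all grid points yields $\|T\|_\infty \le C\big(h^3 + (h/r)^2\big)$.

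\medskip

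To convert $\|T\|_\infty$ into $\|E\|_\infty$ I would construct a nonnegative comparison (barrier) function: choose $\Phi(x,y)$ with $\kappa\Delta\Phi \le -1$ on the domain, e.g. $\Phi = \tfrac12 C_\Omega - \tfrac{1}{4\kappa}\big((x-a)^2+(y-c)^2\big)$ scaled so $\Phi \ge 0$ on $\bar\Omega$, so that the discretization of $\Phi$ (being exact or near-exact on quadratics for all the schemes used, again by their construction) satisfies $(A_h \Phi_h)_{ij} \le -c_0 < 0$ at every interior/hanging row, while the Dirichlet rows are controlled directly. Then $\pm E \le \|T\|_\infty\, c_0^{-1}\,\Phi_h$ follows from $A_h^{-1}\ge 0$ applied to $A_h(\|T\|_\infty c_0^{-1}\Phi_h \mp E) \ge 0$, giving $\|E\|_\infty \le C\,\|T\|_\infty \le C\big(O(h^3)+O((h/r)^2)\big)$, which is the claim.

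\medskip

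The main obstacle, and the step deserving the most care, is the treatment of the irregular IIM grid points in the fine mesh: there the raw local truncation error is only $O(1)$ (or $O(h/r)$) rather than $O((h/r)^2)$, so a naive bound on $\|T\|_\infty$ fails. The resolution—standard for maximum-principle IIM but needing to be spelled out—is that one splits $E = E^{(1)} + E^{(2)}$, bounding the contribution of the finitely many large-truncation rows via the discrete Green's function (whose entries are $O((h/r)^2\log)$ or $O((h/r))$ near a curve of irregular points, times the number of such points $\sim 1/h$), and absorbing the remainder with the barrier argument above. Making this counting rigorous across the coarse/fine/border/hanging interfaces, and checking that the global $A_h$ (not just each block) is still an M-matrix once the border and hanging-node rows are glued in, is the genuinely delicate part; the rest is bookkeeping of the already-derived truncation orders.
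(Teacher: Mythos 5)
Your proposal is correct and follows essentially the same route as the paper: bound the local truncation errors type-by-type (fourth order at coarse points, super-third at hanging nodes, lower order at fine/irregular IIM points), invoke the sign property to get an M-matrix, and convert $\|T\|_\infty$ into $\|E\|_\infty$ via the discrete maximum principle. The paper simply delegates the comparison-function step to Theorems~6.1 and 6.2 of Morton and Mayers, whereas you spell out the barrier construction and the Green's-function treatment of the irregular points explicitly; this is a more detailed rendering of the same argument, not a different one.
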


\begin{proof}
  Note that, in the interior domain, the regularity of the solution depends on the source term $f$.  In the neighborhood of coarse grid points, the needed regularity is $u(x,y)\in C^6$ and we have $| T_{ij} | \le C h^4$.   In the neighborhood of  hanging nodes, the needed regularity is $u(x,y)\in C^5$ and we  know that the local truncation errors are bounded by $| T_{ij}| \le C h^3$.   In the neighborhood of  fine grid points including irregular grid points near the interface, the needed regularity is $u(x,y)$ is piecewise $C^3$ and we have $| T_{ij} | \le C h/r$.
 Note also that the coefficient matrix of the finite difference equations is an M-matrix. Thus, from  Theorem~6.1 and Theorem~6.2 of Morton~\&~Mayer's book, \cite{morton-mayers}  we obtain the conclusion.
\end{proof}

\begin{remark} $\null \!\!\!\!$
\bi
    \item We have obtained  explicit expressions for the coefficients and the weights for $r=2,4, \cdots, 16$ that satisfy the conditions of the theorem. Thus, the convergence is guaranteed and confirmed for those $r$'s.
    \item In our numerous numerical tests, the largest errors  seem to occurr near the interface rather than at the hanging nodes. We conjecture that the error should be bounded by $ \left \| u(x_i,y_j)- U_{ij} \right \|_{\infty}  \le C \left ( O(h^4) + O\left ({h/r}\right )^2  \right )$ instead of $O(h^3)$.
\ei
\end{remark}

\subsection{Numerical experiments for general  2D interface problems}

We have tested the two-grid method for various benchmark problems in the literature. We list some of the results in this subsection.

\begin{example} \label{ex1}  Peskin's IB model in two dimensions.
\end{example}
This example was first presented in \cite{rjl-li:sinum}, and later was employed
 by the AMR-IB method in \cite{roma:thesis,Roma-Peskin-Berger}.
The  partial differential equation is
\eqm
 u_{xx}+u_{yy}=\int_{\Gamma}{2\, \delta(x-X(s))\delta(y-Y(s))\, ds},
\enm
where $\Gamma$ is a circle  interface  with the radius $r=0.5$  within the square domain $R=(-1,1)\times(-1,1)$.  The problem can be written  equivalently as,
\eqm
 u_{xx}+u_{yy}=0, \quad \mbox{on} \quad \Omega\backslash\Gamma,
\qquad [u]_{\Gamma}=0, \quad \left [\frac{\partial u}{\partial n}
\right]_{\Gamma}=2,
\enm
in the domain $R$.  The true solution to the interface problem is
\eqm
 u(x,y) = \left\{ \begin{array}{ll}
1 & \textrm{if $r\leq 1/2$}\\ \eqsp
1+\log(2r) & \textrm{if $r>1/2,$}
\end{array} \right.
\enm
where $r=\sqrt{x^2+y^2}$. The Dirichlet boundary condition is  applied according
to the true solution along the boundary of the square.

\begin{table}[hptb]
\centering \caption{Comparison of numerical results using IB, IIM, and the two-grid method for Example~\ref{ex1}, with $r=2,4,8$.} \label{tab1}

\vthin
\vthin

\begin{tabular}{|c|c|c||c c|}
\hline
     $N$ & IB & IIM & $\|E \|_{\infty}$ (coarse ) & $\|E \|_{\infty}$ (fine) \\  \hline
      20  & $3.614\times 10^{-1}$  & $2.3908 \times 10^{-3}$ &  $1.6088\times 10^{-4}$    & $ 3.1425\times 10^{-4}$   \\ \hline
     \multirow{3}{*}{40} &\multirow{3}{*}{ $2.6467\times 10^{-2}$} &\multirow{3}{*}{$8.3461 \times 10^{-3}$} &  $3.8203 \times 10^{-5}$  &
     $7.8461   \times 10^{-5}$ \\ \cline{4-5}
     & & & $5.9363  \times 10^{-6}$ &  $1.6361\times 10^{-5}$     \\ \cline{4-5}
     & & &  $1.9566 \times 10^{-6}$   &   $4.6673 \times 10^{-6}$    \\ \hline
 \multirow{3}{*}{80} &\multirow{3}{*}{ $1.3204\times 10^{-2}$} &\multirow{3}{*}{$2.4451 \times 10^{-4}$} &   $6.7665 \times 10^{-6}$    &   $1.5000 \times 10^{-6}$  \\ \cline{4-5}
     & & & $2.0704  \times 10^{-6}$  &   $3.9415 \times 10^{-6}$    \\ \cline{4-5}
     & & & $3.6112\times 10^{-7}$  &$  8.1338\times 10^{-7}$   \\   \hline
    \multirow{3}{*}{160} &\multirow{3}{*}{ $6.6847\times 10^{-3}$} &\multirow{3}{*}{$6.6573 \times 10^{-4}$} &  $2.2361\times 10^{-6}$    & $3.7708\times 10^{-6}$ \\ \cline{4-5}
     & & & $3.8562\times 10^{-7}$  &$ 7.5454\times 10^{-7}$    \\ \cline{4-5}
     & & & $2.0934\times 10^{-7}$  &$  2.5683\times 10^{-7}$     \\  \hline
     \multirow{3}{*}{320} &\multirow{3}{*}{ $3.3393\times 10^{-3}$} &\multirow{3}{*}{$1.5672 \times 10^{-5}$} & $7.9409\times 10^{-7}$ &$ 9.3940\times 10^{-7}$   \\ \cline{4-5}
     & & & $1.8957\times 10^{-7}$ &  $  2.2149\times 10^{-7}$ \\ \cline{4-5}
     & & & 5$.0883\times 10^{-8}$ &$   5.9343\times 10^{-8}$ \\ \hline
\hline

\end{tabular}
\end{table}

In Table~\ref{tab1}, we show some numerical results. We can see that the the numerical results of the two-grid method performs much better compared with the immersed boundary (IB) method \cite{peskin-review}  and the IIM even with small $r=2,4,8$. The errors in the coarse grid  using the fourth-order method are comparable with that at the fine grid using the second order IIM. From the result using a uniform mesh, we observe fourth order convergence  using the two-grid method  when  $r=4$.

\begin{example} \label{ex-flower}
  An example of discontinuous coefficient and solution, and a complicated interface.
\end{example}
This example is from \cite{li:fast, li:book} where the augmented IIM was developed for elliptic interface problems with piecewise constant coefficients. We consider the elliptic interface problem $(\kappa u_x)_x+(\kappa u_y)_y = f(x,y)$ in the domain $(-1,1)\times(-1,1)$ with a flower shaped interface given by the following equation in the polar coordinates
\eqm
  r(\theta)=0.5+0.1\sin(8\theta), \quad  0\leq \theta < 2\pi.
\enm
The coefficient $\kappa$ is a piecewise constant: $\kappa^{-}$ inside the interface and $\kappa^{+}$ outside.
The right hand side $f$, the Dirichlet boundary conditions, and the jump conditions are all determined from  the  exact solution, see also Figure \ref{fig:flower},
\eqm
 u(x,y) = \left\{ \begin{array}{ll}
 \dsp \frac{r^2}{\kappa^{-}} & \textrm{if $(x,y)\in\Omega^{-},$}\\ \eqsp
  \dsp \frac{r^4-0.1\log 2r}{\kappa^{+}} & \textrm{if $(x,y)\in\Omega^{+}$}.
\end{array} \right.
\enm

\vspace{-0.15cm}
\begin{table}[hptb]
\centering \caption{Convergence results of two-grid method for Example~\ref{ex-flower}. Left: $\kappa^-=1$, $\kappa^+=10$; Right:
$\kappa^-=50$, $\kappa^+=1$. } \label{tab:flower}

\vthin

\begin{tabular}{|c||c c|}
\hline
     $N$ &   $\|E \|_{\infty}$ (coarse ) & $\|E \|_{\infty}$ (fine)  \\  \hline
           \multirow{3}{*}{40} &  $2.6514\times 10^{-3}$    & $ 3.5781\times 10^{-4}$   \\ \cline{2-3}
      & $4.6060  \times 10^{-4}$ &  $6.3633\times 10^{-4}$     \\  \cline{2-3}
      &  $4.1335 \times 10^{-5}$   & 6.4903  $ \times 10^{-5}$    \\ \hline
 \multirow{3}{*}{80} &  $ 4.9900 \times 10^{-6}$    &   $ 2.1077 \times 10^{-5}$  \\  \cline{2-3}
      & $ 6.8967 \times 10^{-6}$  &  $2.3883 \times 10^{-6}$    \\  \cline{2-3}
      & $ 2.3110 \times 10^{-7}$  & $  6.7858 \times 10^{-7}$   \\   \hline
    \multirow{3}{*}{160} &  $ 9.8969 \times 10^{-7}$    & $ 2.3783 \times 10^{-6}$ \\  \cline{2-3}
     & $ 3.3377 \times 10^{-7}$  &$  6.7788 \times 10^{-7}$    \\  \cline{2-3}
     & $ 7.6309\times 10^{-8}$  &$  1.7808 \times 10^{-7}$     \\  \hline
     \multirow{3}{*}{320}  & $ 2.8677 \times 10^{-7}$ &$  6.8180 \times 10^{-7}$   \\  \cline{2-3}
     & $ 8.4336 \times 10^{-8}$ &  $ 1.7984 \times 10^{-7}$ \\  \cline{2-3}
      & $ 2.1948 \times 10^{-8}$ &$   4.3751 \times 10^{-8}$ \\ \hline
\hline

\end{tabular}
\hspace{0.5cm} %
\begin{tabular}{|c||c c|}
\hline
  $N$ &   $\|E \|_{\infty}$ (coarse ) & $\|E \|_{\infty}$ (fine) \\  \hline
           \multirow{3}{*}{40} &  $2.9005\times 10^{-3}$    & $ 5.1821 \times 10^{-3}$   \\  \cline{2-3}
      & $ 2.4027  \times 10^{-4}$ &  $4.7331\times 10^{-4}$     \\  \cline{2-3}
      &  $  4.1345 \times 10^{-5}$   &   $ 8.0766\times 10^{-5}$    \\ \hline
 \multirow{3}{*}{80} &  $2.5074  \times 10^{-4}$    &   $ 4.7357 \times 10^{-4}$  \\  \cline{2-3}
      & $ 4.3436 \times 10^{-5}$  &  $8.1008 \times 10^{-5}$    \\  \cline{2-3}
      & $ 2.3557 \times 10^{-6}$  & $  4.9060 \times 10^{-6}$   \\   \hline
    \multirow{3}{*}{160} &  $ 4.5306 \times 10^{-5}$    & $ 8.1051 \times 10^{-5}$ \\  \cline{2-3}
     & $  2.4412 \times 10^{-7}$  &$ 4.9550  \times 10^{-6}$    \\  \cline{2-3}
     & $3.8420 \times 10^{-7}$  &$  1.1772 \times 10^{-6}$     \\  \hline
     \multirow{3}{*}{320}  & $ 2.483 \times 10^{-6}$ &$ 4.9601 \times 10^{-6}$   \\  \cline{2-3}
     & $ 4.1551 \times 10^{-7}$ &  $  1.1829 \times 10^{-6}$ \\  \cline{2-3}
      & $ 1.5622 \times 10^{-7}$ &$  2.7301  \times 10^{-7}$ \\ \hline
 \end{tabular}

\end{table}

\begin{figure}[hbtp]
  \includegraphics[width=0.5\textwidth]{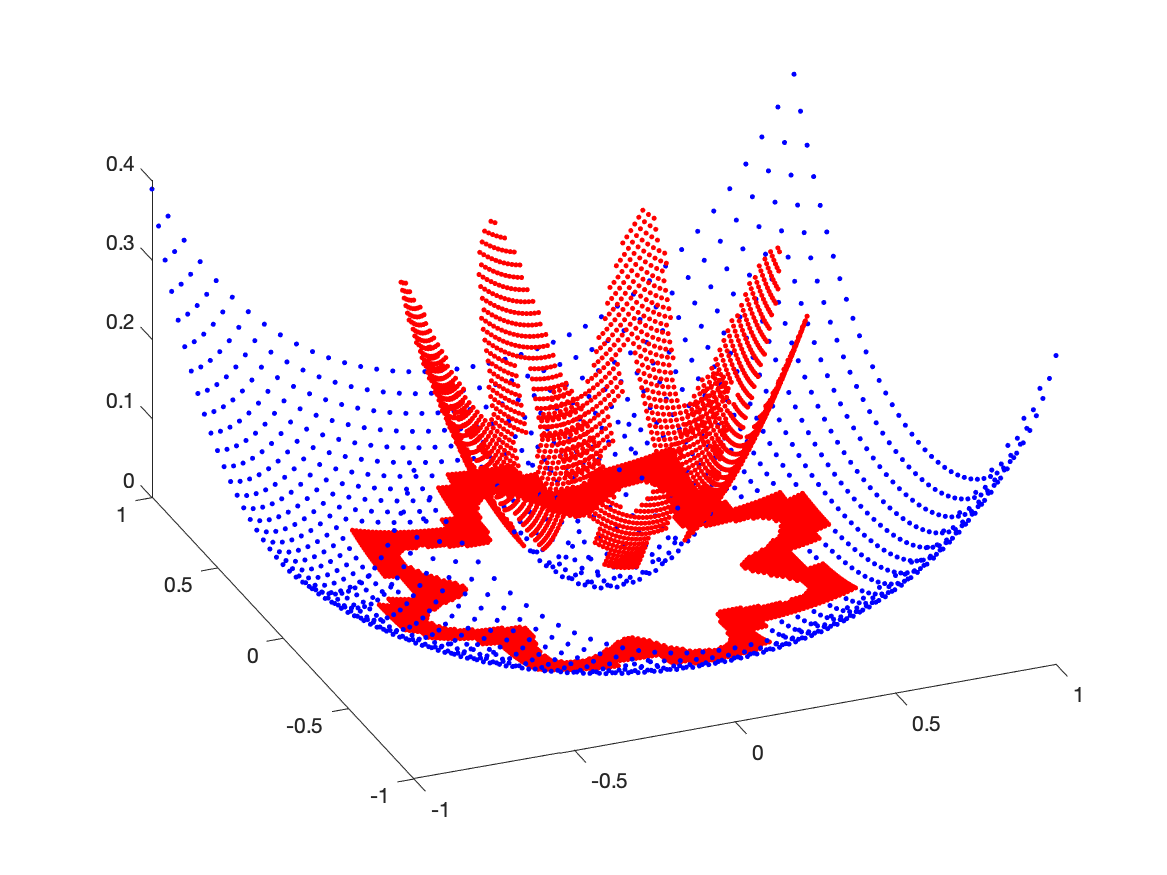}
  \includegraphics[width=0.5\textwidth]{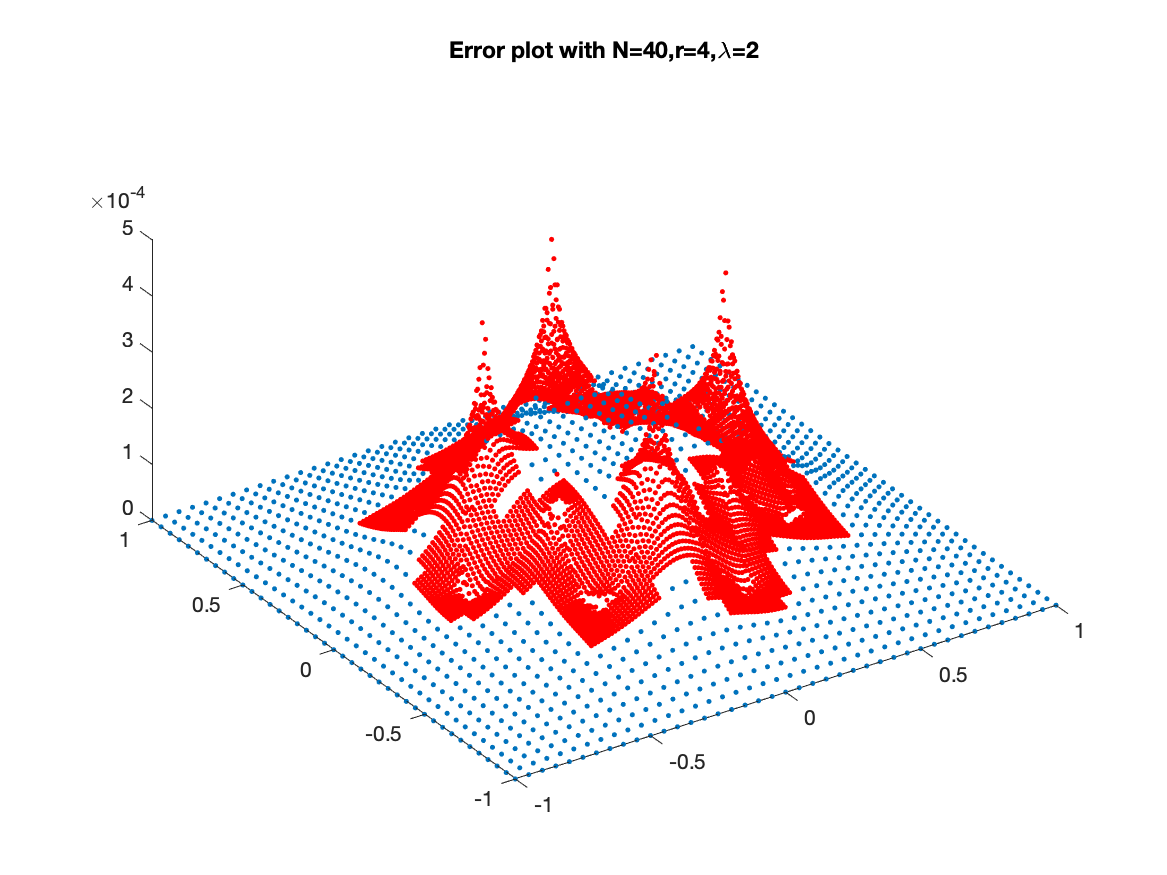}
\caption{The solution plot of the flower example with  $N=40$, $r=2$, and $\lambda=2$. Left plot: $u(x,y)$; Right plot: error plot with $\|E\|_{\infty}=4.7331\times 10^{-4}$.  We use blue dots to represent the values at the coarse grid, and red dots for those at the fine grid.}
\label{fig:flower}
\end{figure}

This is a  general example in which both the coefficients and the solution are discontinuous. The interface is complicated with larger curvature at some part of the interface, see Figure~\ref{fig:flower}. In the figure, we use blue dots to represent the values at the coarse grid, and red dots for those at the fine grid.
In Table~\ref{tab:flower}, we show the numerical results with $r=2,4,8$. We have the same convergence properties as before, that is,  compared with the results using a uniform mesh, we obtain roughly fourth order convergence when  $r=4$.  Note also that the result of the two-grid method  with $r=8$ has a fourth order convergence compared with that obtained with $r=2$.

\subsection{An internal layer  example}

Now we apply  the developed  two-grid method to an internal layer problem in which the solution to the Poisson equation is
\eqm
  u(x,y) = \arctan \left ( \frac{\sqrt{x^2 + y^2 +3/4}-1 }{\epsilon} \right ), \qquad -1 < x, \; y < 1.
\enm
When $\epsilon$ is small, there is an internal layer centered at the circle $x^2+y^2 = 1/4$. The source term below is obtained from Matlab symbolic ToolBox,
\eqml{layer_f}
 && \dsp  f(x,y) = \frac{1}{\epsilon}  \left (  \frac{2}{ (\sigma_3^2 +1 ) \sqrt{\sigma_4} }  -  \frac{2(x^2+y^2)}{\sigma_1} -  \frac{200(x^2+y^2)\sigma_3}{\sigma_2}  \right ), \; \; \mbox{with} \\ \eqsp
  &&  \!\!\! \dsp \sigma_1 = (\sigma_3^2 +1 )\sigma_4^{3/2}, \; \sigma_2 =  (\sigma_3^2 +1 )^2 \sigma_4, \;
\sigma_3 = \frac{1}{\epsilon}  \left (  \sqrt{\sigma_4} -1 \right), \; \sigma_4 = x^2 + y^2 + \frac{3}{4}.
\enml
The solution and the source term are displayed in Figure~\ref{fig:layer} at the coarse grid (blue dots) and the fine grid (red dots) with
$\epsilon=0.01$, $N=40$, $r=4$, and $\lambda=2$.  An internal boundary layer centered along the circle $x^2+y^2 = 1/4$ can be seen clearly.

\begin{figure}[hbtp]
\centering
  \includegraphics[width=0.45\textwidth]{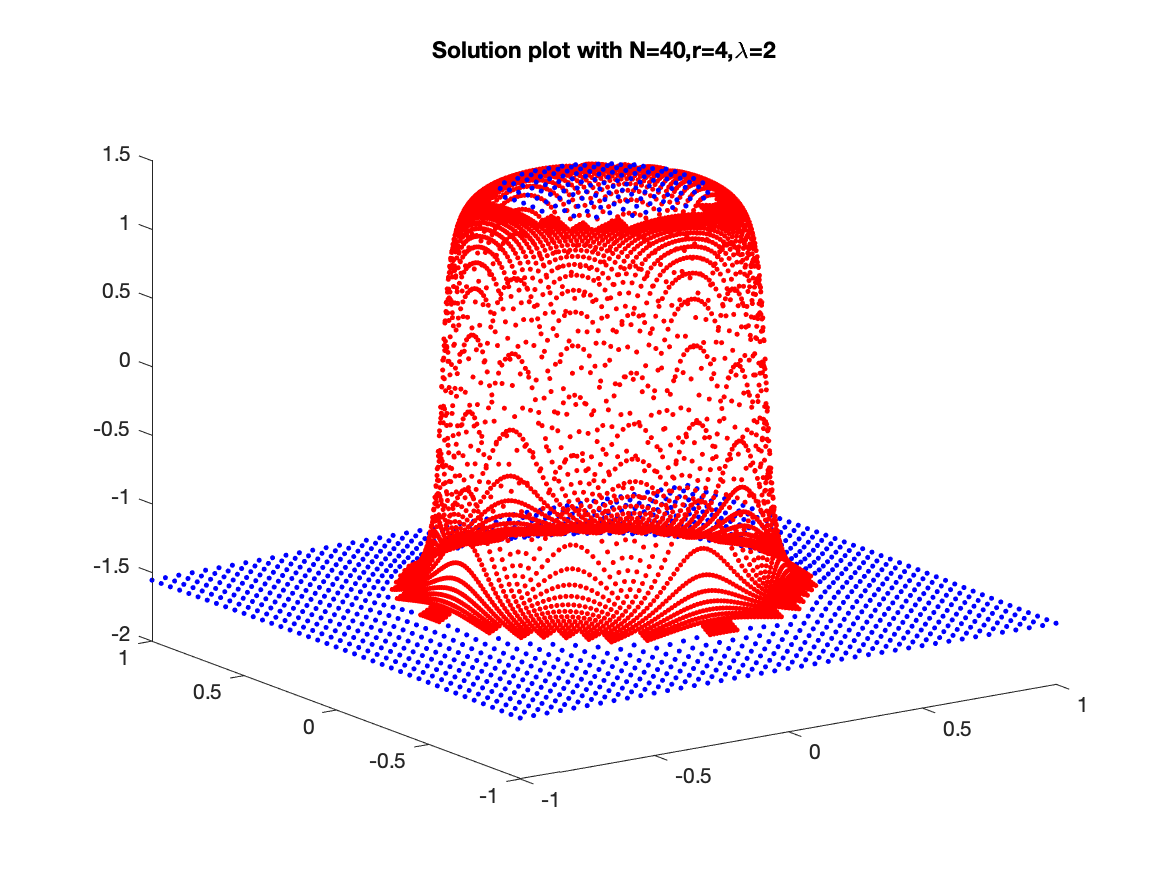}
  \includegraphics[width=0.45\textwidth]{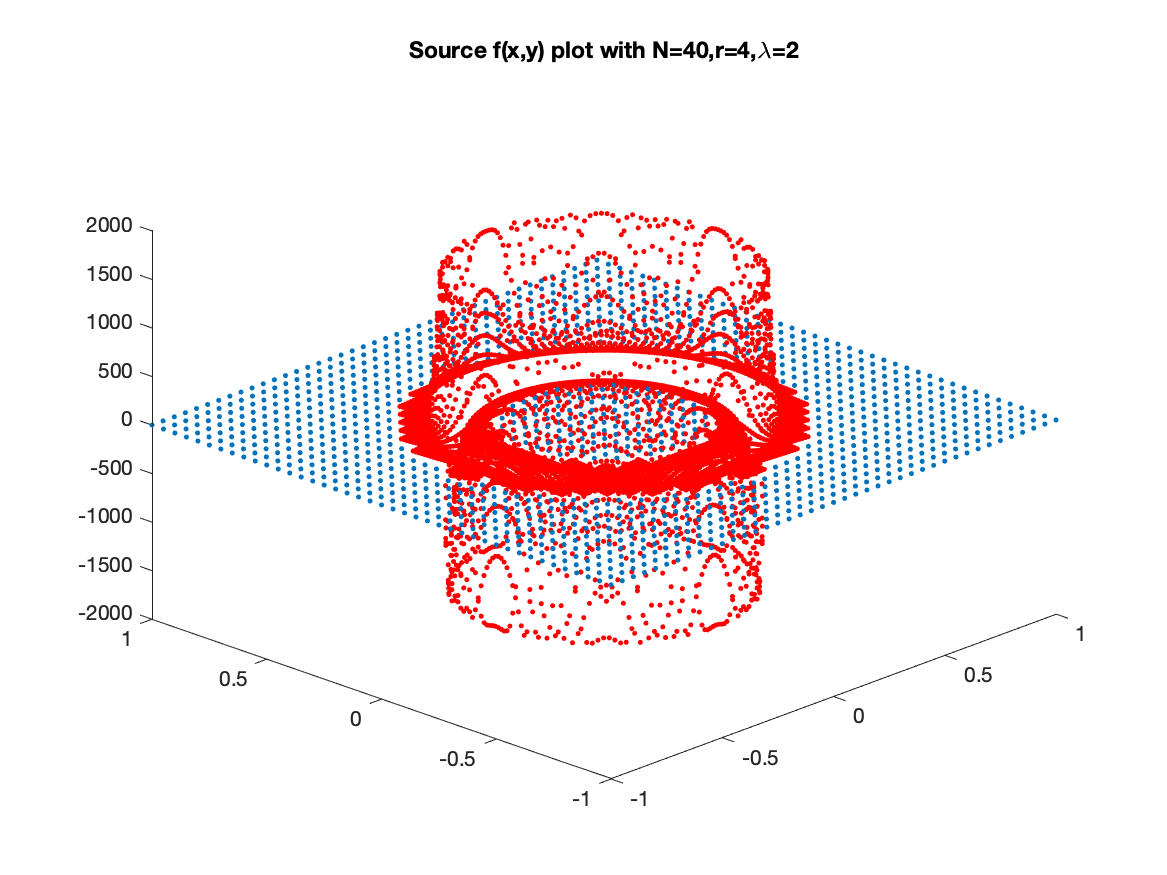}
\caption{The solution plot of the internal layer example with $\epsilon=0.01$, $N=40$, $r=4$, and $\lambda=2$. Left plot: $-u(x,y)$; Right plot: $f(x,y)$ which has large magnitude and variations near the internal layer around $x^2+y^2 = 1/4$. We use blue dots to represent the values at the coarse grid, and red dots for those at the fine grid.}
\label{fig:layer}
\end{figure}

For this problem, we can apply the standard fourth order compact scheme both in the fine and coarse meshes since there is no interface which does not capture the internal layer well unless we use an unfordable fine mesh.
The developed two-grid method worked well for the internal layer problem.
In Table~\ref{tab:layer}, we show some numerical methods with different $N$ and $r$. In this example, the largest errors have mostly appeared at hanging nodes since they have the largest local truncation errors.
We have the same convergence properties as before, that is, from the uniform grid $(r=1)$ to that obtained from $r=4$, or from $r=2$ to $r=8$, we obtain roughly fourth order convergence. The increased computational complexity only happened in the refined tube.

\begin{table}[hptb]
\centering \caption{Convergence results of two-grid method for the internal layer example with $\epsilon=0.01$, and $r=2,4,8$. } \label{tab:layer}

\vthin
\vthin

\begin{tabular}{|c||c c|}
\hline
     $N$ &   $\|E \|_{\infty}$ (coarse ) & $\|E \|_{\infty}$ (fine) \\  \hline
           \multirow{3}{*}{20} &  $ 2.7557$    & $3.8289 $   \\ \cline{2-3}
      & $3.0755  \times 10^{-1}$ &  $4.9217 \times 10^{-1}$     \\  \cline{2-3}
      &  $2.8506 \times 10^{-3}$   &   $ 8.0663 \times 10^{-3}$    \\ \hline
 \multirow{3}{*}{40} &  $ 3.3413  \times 10^{-1}$    &   $ 4.9067 \times 10^{-1}$  \\  \cline{2-3}
      & $ 5.3367\times 10^{-3}$  &  $1.0304 \times 10^{-2}$    \\  \cline{2-3}
      & $ 3.4447 \times 10^{-5}$  & $  2.0639  \times 10^{-4}$   \\   \hline
    \multirow{3}{*}{80} &  $  7.4351 \times 10^{-3}$    & $ 7.4351 \times 10^{-2}$ \\  \cline{2-3}
     & $  1.3716 \times 10^{-4}$  &$   2.5445  \times 10^{-4}$    \\  \cline{2-3}
     & $  4.4450 \times 10^{-6}$  &$  1.4286  \times 10^{-5}$     \\  \hline
     \multirow{3}{*}{160}  & $ 3.9407  \times 10^{-3}$ &$  4.5279  \times 10^{-3}$   \\  \cline{2-3}
     & $ 1.4769  \times 10^{-4}$ &  $ 1.8625  \times 10^{-4}$ \\  \cline{2-3}
      & $ 1.2278   \times10^{-6}$ &$   1.6815  \times 10^{-6}$ \\ \hline
\hline

\end{tabular}
\end{table}

In Figure~\ref{fig:layer}, we plot the computed solution (left) and the source term (right) using blue dots to represent the values at the coarse grid, and red dots for those at the fine grid. The internal layer is caused from large variations in the source term near the center of the internal layer $x^2 + y^2 = 1/4$.

\section{Conclusions and Acknowledgements}

In this paper, we have developed two-grid methods for some elliptic interface and internal layer problems in one and two dimensions. The purpose is focused on the accuracy of the solutions near the interface or internal layers so that the accuracy is comparable with the solution in the coarse grid obtained from fourth order compact finite difference schemes. New high order finite difference schemes have been developed at boarder grid points neighboring two grids for one-dimensional and two-dimensional problems, and at hanging nodes for two dimensional problems.
The developed two-grid methods preserve the discrete maximum principle from the sign property, which leads to the convergence of the finite difference methods.

We would like to thank Dr. Danfu Han of Hangzhou Normal University for the internal layer example.
Zhilin Li was partially supported by a Simons grant 633724. K. Pan is supported by Science Challenge Project (No. TZ2016002), the National Natural Science Foundation of China (No. 41874086), the Excellent Youth Foundation of Hunan Province of China (No. 2018JJ1042).

\parskip= 0.0cm
\renewcommand{\baselinestretch}{1.0}

\bibliographystyle{amsplain}
\bibliography{twogrid}

\end{document}